\theoremstyle{definition}
\newtheorem{thm}{Theorem}[subsection]
\newtheorem*{thm*}{Theorem}
\newtheorem{defi}[thm]{Definition}
\newtheorem*{defi*}{Definition}
\newtheorem*{acknowledge}{Acknowledgement}
\newtheorem{cor}[thm]{Corollary}
\newtheorem*{cor*}{Corollary}
\newtheorem{prop}[thm]{Proposition}
\newtheorem*{prop*}{Proposition}
\newtheorem{lem}[thm]{Lemma}
\newtheorem*{lem*}{Lemma}
\newtheorem{ex}[thm]{Example}
\newtheorem*{ex*}{Example}
\newtheorem{rem}[thm]{Remark}
\newtheorem*{rem*}{Remark}
\newtheorem*{hw*}{Homework}
\newcommand{\C}{\mathbb{C}}
\newcommand{\Z}{\mathbb{Z}}
\newcommand{\N}{\mathbb{N}}
\newcommand{\T}{\mathbb{T}}
\DeclareMathOperator{\supp}{supp}
\newcommand{\tight}{\mathrm{tight}}
\newcommand{\Bis}{\mathrm{Bis}}
\DeclareMathOperator{\Iso}{\mathrm{Iso}}
\DeclareMathOperator{\Span}{\mathrm{span}}
\DeclareMathOperator{\ev}{ev}
\DeclareMathOperator{\dom}{\mathrm{dom}}
\DeclareMathOperator{\id}{\mathrm{id}}
\def\i<#1>{\langle #1 \rangle}
\def\l<#1>{\left\langle #1 \right\rangle}
\renewenvironment{proof}[1][\proofname]{\par
  \normalfont
  \topsep6\p@\@plus6\p@ \trivlist
  \item[\hskip\labelsep{\bfseries #1}\@addpunct{.}]\ignorespaces
}{%
  \endtrivlist
}
\renewcommand{\proofname}{\sc{Proof}}
\newcommand*{\defeq}{\mathrel{\rlap{%
                     \raisebox{0.3ex}{$\m@th\cdot$}}%
                     \raisebox{-0.3ex}{$\m@th\cdot$}}%
                     =}
\title[]{Submodules of normalisers in groupoid C*-algebras and discrete group coactions}
\author{Fuyuta Komura}
\address{Center for Advanced Intelligence Project, RIKEN,
	3-14-1 Hiyoshi, Kohoku-ku, Yokohama, 223-8522, Japan}
\address{Phone: +81-45-566-1641+42706}
\address{Fax: +81-45-566-1642}
\email{fuyuta.k@keio.jp}
\subjclass[2010]{20M18, 22A22, 46L05}
\begin{document}
\maketitle
\begin{abstract}
	
	In this paper, we investigate certain submodules in C*-algebras associated to effective \'etale groupoids.
	First, we show that a submodule generated by normalizers is a closure of the set of compactly supported continuous functions on some open set.
	As a corollary, we show that discrete group coactions on groupoid C*-algebras are induced by cocycles of \'etale groupoids if the fixed point algebras contain C*-subalgebras of continuous functions vanishing at infinity on the unit spaces.
	In the latter part, we prove the Galois correspondence result for discrete group coactions on groupoid C*-algebras.

\end{abstract}

\section{Introduction}

\'Etale groupoids are widely utilized in the theory of C*-algebras.
Since Renault initiated the theory of groupoid C*-algebras in \cite{renault1980groupoid},
many researchers have studied groupoid C*-algebras.
One of the natural tasks is to characterize properties of groupoid C*-algebras in terms of \'etale groupoids.
For example,
the relation between nuclearity of groupoid C*-algebras and amenability of topological groupoids is studied in \cite{anantharaman2000}.
In addition, simplicity of full groupoid C*-algebras is characterized in \cite{Brown2014}.
Recently, the authors in \cite{BrownExelFuller2021} established the Galois correspondence result between \'etale groupoids and twisted groupoid C*-algebras.
Namely,
the authors in \cite{BrownExelFuller2021} proved that there is a one-to-one correspondence between open subgroupoids of an \'etale groupoid $G$ and certain intermediate C*-subalgebras of twisted groupoid C*-algebras that contain $C_0(G^{(0)})$,
provided that the underlying \'etale groupoid $G$ is effective.
In the present paper,
we study certain $C_0(G^{(0)})$-submodules in groupoid C*-algebras.
More precisely,
we prove that there exists a one-to-one correspondence between $C_0(G^{(0)})$-submodules of normalizers in groupoid C*-algebras and open sets in \'etale groupoids under the assumption that \'etale groupoids are effective (Theorem \ref{theorem main theorem about sub module}).
This result allows us to capture $C_0(G^{(0)})$-submodules by open sets in \'etale groupoids.
We have some applications of this result as follows.

Submodules in C*-algebras appear in many contexts.
We give such examples relevant to the present paper.
First,
we consider discrete group coactions on C*-algebras.
Discrete group coactions generalize compact abelian group actions on C*-algebras and are studied in 	\cite{quigg_1996}, for example.
Assume that $\Gamma$ is a discrete group,
$A$ is a C*-algebra and $\delta\colon A\to A\otimes C^*_r(\Gamma)$ is a coaction.
Then one can decompose $A$ into the spectral submodules as $A=\overline{\sum_{s\in\Gamma}A_s}$,
where $A_s\subset A$ is the spectral submodule of $s\in \Gamma$.
Note that $A_s$ is a $A_e$-submodule in $A$,
where $A_e$ is the fixed point algebra of the coaction $\delta$.
Hence it is essential to understand spectral submodules in the study of coactions.
In the present paper,
we consider discrete group coactions on groupoid C*-algebras $\delta\colon C^*_r(G)\to C^*_r(G)\otimes C^*_r(\Gamma)$.
We prove that, if the fixed point algebra $C^*_r(G)_e$ contains $C_0(G^{(0)})$,
then the coaction $\delta\colon C^*_r(G)\to C^*_r(G)\otimes C^*_r(\Gamma)$ is induced from a continuous cocycle $c\colon G\to \Gamma$ (Corollary \ref{corollary coaction of discrete group on groupoid C*-algebras}).
Many problems about coactions are reduced to problems about continuous cocycles $c\colon G\to \Gamma$,
which are more or less treatable.
For example,
we prove the Galois correspondence result for discrete group coactions on groupoid C*-algebras (Theorem \ref{theorem intermediate subgroupoids and subgroups}).
We mention that similar results are obtained in \cite[Theorem A.1]{BrownFullerPitts2021}.
They assumed that the linear span of homogeneous normalisers is dense in the whole algebra.
It seems worth noting that we do not impose such an assumption in Corollary \ref{corollary coaction of discrete group on groupoid C*-algebras} (instead, we assume that \'etale groupoids are effective).

Submodules in C*-algebras also appear in noncommutative Cartan subalgebras \cite{noncommutativeCartanExel}.
For an inclusion of C*-algebras $B\subset A$,
Exel defined an inverse semigroup that consists of slices,
which are certain $B$-submodules in $A$.
In this paper,
we prove that the inverse semigroups of slices are isomorphic to the inverse semigroups of bisections in $G$ if we take $A=C^*_r(G)$ and $B=C_0(G^{(0)})$, where $G$ is an effective \'etale groupoid (Corollary \ref{corollary inverse semigroup of bisections are inverse semigroup of slices}).

This paper is organized as follows.
In Section 1,
we recall fundamental notions on \'etale groupoids,
groupoid C*-algebras, discrete group coactions and inverse semigroups.
In Section 2, we prove our main theorems.
First,
we prove that there exists a one-to-one correspondence between $C_0(G^{(0)})$-submodules of normalizers in a groupoid C*-algebra $C^*_r(G)$ and open sets in the underlying \'etale groupoid $G$ (Theorem \ref{theorem main theorem about sub module}).
Then we obtain two corollaries.
The first corollary deals with inverse semigroups of slices.
More precisely, we prove that inverse semigroups of slices are isomorphic to inverse semigroups of bisections for groupoid C*-algebras associated to effective \'etale groupoids (Corollary \ref{corollary inverse semigroup of bisections are inverse semigroup of slices}).
The second corollary deals with discrete group coactions on groupoid C*-algebras.
Namely, we obtain characterization of discrete group coactions on groupoid C*-algebras that are induced from cocycles (Corollary \ref{corollary coaction of discrete group on groupoid C*-algebras}).
In Subsection \ref{subsection Galois correspondence for discrete group coactions on groupoid C*-algebras },
we obtain the Galois correspondence result for discrete group coactions on groupoid C*-algebras (Corollary \ref{corollary galois corr for discrete group coactions}) by analysing discrete group cocycles.

\begin{acknowledge}
	The author would like to thank Prof.\ Takeshi Katsura for his support and encouragement.
	The author is also grateful to Yuhei Suzuki for fruitful discussions.
	In addition, the author would like to express his gratitude to the anonymous reviewer for the constructive comments and suggestions.
	This work was supported by JST CREST Grant Number JPMJCR1913 and RIKEN Special Postdoctoral Researcher Program.
\end{acknowledge}

\section{Preliminaries}

In this section,
we recall fundamental notions about \'etale groupoids and groupoid C*-algebras.
In the last half of this section,
we recall inverse semigroups and discrete group coactions on C*-algebras.

\subsection{\'Etale groupoids}

We recall the basic notions on \'etale groupoids.
See \cite{asims} and \cite{paterson2012groupoids} for more details.

A groupoid is a set $G$ together with a distinguished subset $G^{(0)}\subset G$,
domain and range maps $d,r\colon G\to G^{(0)}$ and a multiplication 
\[
G^{(2)}\defeq \{(\alpha,\beta)\in G\times G\mid d(\alpha)=r(\beta)\}\ni (\alpha,\beta)\mapsto \alpha\beta \in G
\]
such that
\begin{enumerate}
	\item for all $x\in G^{(0)}$, $d(x)=x$ and $r(x)=x$ hold,
	\item for all $\alpha\in G$, $\alpha d(\alpha)=r(\alpha)\alpha=\alpha$ holds,
	\item for all $(\alpha,\beta)\in G^{(2)}$, $d(\alpha\beta)=d(\beta)$ and $r(\alpha\beta)=r(\alpha)$ hold,
	\item if $(\alpha,\beta),(\beta,\gamma)\in G^{(2)}$,
	we have $(\alpha\beta)\gamma=\alpha(\beta\gamma)$,
	\item\label{inverse} every $\gamma \in G$,
	there exists $\gamma'\in G$ which satisfies $(\gamma',\gamma), (\gamma,\gamma')\in G^{(2)}$ and $d(\gamma)=\gamma'\gamma$ and $r(\gamma)=\gamma\gamma'$.   
\end{enumerate}
Since the element $\gamma'$ in (\ref{inverse}) is uniquely determined by $\gamma$,
$\gamma'$ is called the inverse of $\gamma$ and denoted by $\gamma^{-1}$.
We call $G^{(0)}$ the unit space of $G$.
A subgroupoid of $G$ is a subset of $G$ which is closed under the inversion and multiplication. 
A subgroupoid of $G$ is said to be wide if it contains $G^{(0)}$.
For $U\subset G^{(0)}$, we define $G_U\defeq d^{-1}(U)$ and $G^{U}\defeq r^{-1}(U)$.
We also define $G_x\defeq G_{\{x\}}$ and $G^x\defeq G^{\{x\}}$ for $x\in G^{(0)}$.

A topological groupoid is a groupoid equipped with a topology where the multiplication and the inverse are continuous.
A topological groupoid is said to be \'etale if the domain map is a local homeomorphism.
Note that the range map of an \'etale groupoid is also a local homeomorphism.
An \'etale groupoid is said to be ample if it has an open basis which consists of compact sets.
\textbf{In this paper,
we assume that \'etale groupoids are always locally compact Hausdorff unless otherwise stated}, although there exist important \'etale groupoids that are not Hausdorff.
In this paper, we mean locally compact Hausdorff \'etale groupoids by \'etale groupoids.

A subset $U$ of an \'etale groupoid $G$ is called a bisection if the restrictions $d|_U$ and $r|_U$ are injective.
It follows that $d|_U$ and $r|_U$ are homeomorphism onto their images if $U$ is a bisection since $d$ and $r$ are open maps.

An \'etale groupoid $G$ is said to be effective if $G^{(0)}$ coincides with the interior of $\Iso(G)$,
where 
\[
\Iso(G)\defeq\{\alpha\in G\colon d(\alpha)=r(\alpha)\}
\]
is the isotropy of $G$.
An \'etale groupoid $G$ is said to be topologically transitive if $r(d^{-1}(U))$ is dense in $G^{(0)}$ for all non-empty open set $U\subset G^{(0)}$.
If $G$ has a dense orbit (i.e.\ there exists $x\in G^{(0)}$ such that $r(d^{-1}(\{x\}))$ is dense in $G^{(0)}$),
then $G$ is topologically transitive.
The converse is true if $G^{(0)}$ is second countable (see \cite[Lemma 3.4]{STEINBERG20192474}).
An \'etale groupoid $G$ is minimal if $r(d^{-1}(\{x\}))$ is dense in $G^{(0)}$ for all $x\in G^{(0)}$.
Obviously, a minimal \'etale groupoid is topologically transitive.

\subsection{Groupoid C*-algebras}

We recall the definition of groupoid C*-algebras.

Let $G$ be an \'etale groupoid.
Then $C_c(G)$, the vector space of compactly supported continuous functions on $G$, is a *-algebra with respect to the multiplication and the involution defined by
\[
f*g(\gamma)\defeq\sum_{\alpha\beta=\gamma}f(\alpha)g(\beta), f^*(\gamma)\defeq\overline{f(\gamma^{-1})},
\]
where $f,g\in C_c(G)$ and $\gamma\in G$.
The left regular representation $\lambda_x\colon C_c(G)\to B(\ell^2(G_x))$ at $x\in G^{(0)}$ is defined by
\[
\lambda_x(f)\delta_{\alpha}\defeq \sum_{\beta\in G_{r(\alpha)}}f(\alpha)\delta_{\alpha\beta},
\]
where $f\in C_c(G)$ and $\alpha\in G_x$.
The reduced norm $\lVert\cdot\rVert_{r}$ on $C_c(G)$ is defined by
\[
\lVert f\rVert_r\defeq \sup_{x\in G^{(0)}} \lVert \lambda_x(f)\rVert
\]
for $f\in C_c(G)$.
We often omit the subscript `$r$' of $\lVert\cdot\rVert_r$ if there is no chance to confuse.
The reduced groupoid C*-algebra $C^*_r(G)$ is defined to be the completion of $C_c(G)$ with respect to the reduced norm.
Note that $C_c(G^{(0)})\subset C_c(G)$ is a *-subalgebra and this inclusion extends to the inclusion  $C_0(G^{(0)})\subset C^*_r(G)$.
The next proposition is well-known.

\begin{prop}[Evaluation] \label{prop evaluation}
	Let $G$ be an \'etale groupoid and $a\in C^*_r(G)$.
	Then $j(a)\in C_0(G)$ is defined by
	\[
	j(a)(\alpha)\defeq\i<\delta_{\alpha}|\lambda_{d(\alpha)}(a)\delta_{d(\alpha)}>
	\]
	for $\alpha\in G$\footnote{In this paper, inner products of Hilbert spaces are linear with respect to the right variables.}.
	Then $j\colon C^*_r(G)\to C_0(G)$ is a norm decreasing injective linear map.
	Moreover, $j$ is an identity map on $C_c(G)$.
	
\end{prop}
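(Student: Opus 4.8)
The plan is to dispatch the four assertions in increasing order of difficulty, treating injectivity as the crux. First I would establish the formula on the dense subalgebra $C_c(G)$. For $f\in C_c(G)$ and $\alpha\in G$, writing $x=d(\alpha)$, the vector $\delta_x\in\ell^2(G_x)$ is a basis element (since $d(x)=x$ forces $x\in G_x$), and computing $\lambda_x(f)\delta_x$ directly from the definition of the left regular representation shows that the coefficient of $\delta_\alpha$ is precisely $f(\alpha)$. Hence $\langle\delta_\alpha|\lambda_x(f)\delta_x\rangle=f(\alpha)$, so $j(f)=f$ and $j$ restricts to the identity on $C_c(G)$. Linearity of $j$ is immediate from linearity of each $\lambda_x$ and of the inner product.

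For the norm estimate I would apply Cauchy--Schwarz: for each $\alpha\in G$ with $x=d(\alpha)$,
\[
|j(a)(\alpha)|=|\langle\delta_\alpha|\lambda_x(a)\delta_x\rangle|\le\lVert\delta_\alpha\rVert\,\lVert\lambda_x(a)\rVert\,\lVert\delta_x\rVert\le\lVert\lambda_x(a)\rVert\le\lVert a\rVert_r,
\]
using $\lVert\delta_\alpha\rVert=\lVert\delta_x\rVert=1$ and the definition of $\lVert\cdot\rVert_r$ as a supremum over $x\in G^{(0)}$. Taking the supremum over $\alpha$ gives $\lVert j(a)\rVert_\infty\le\lVert a\rVert_r$, so $j$ is norm decreasing as a map into the bounded functions on $G$. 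To see that $j(a)$ in fact lies in $C_0(G)$, I would use density: choose $f_n\in C_c(G)$ with $\lVert f_n-a\rVert_r\to0$; then by the estimate just proved $\lVert f_n-j(a)\rVert_\infty=\lVert j(f_n-a)\rVert_\infty\le\lVert f_n-a\rVert_r\to0$, so $j(a)$ is a uniform limit of elements of $C_c(G)$. Since $C_0(G)$ is closed in the supremum norm, $j(a)\in C_0(G)$.

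The substantive point is injectivity, and the key is to express an arbitrary matrix coefficient of $\lambda_x(a)$ as a value of $j(a)$. For $\gamma\in G$ with $d(\gamma)=x$ and $r(\gamma)=y$, right translation $\delta_\eta\mapsto\delta_{\eta\gamma^{-1}}$ defines a unitary $U_\gamma\colon\ell^2(G_x)\to\ell^2(G_y)$ that intertwines the left regular representations, i.e.\ $U_\gamma\lambda_x(f)=\lambda_y(f)U_\gamma$ for all $f\in C_c(G)$; this is checked on basis vectors and extends to all of $C^*_r(G)$ by continuity. Applying this with $\beta,\gamma\in G_x$ and using $U_\gamma\delta_\gamma=\delta_{r(\gamma)}=\delta_y$ yields
\[
\langle\delta_\beta|\lambda_x(a)\delta_\gamma\rangle=\langle\delta_{\beta\gamma^{-1}}|\lambda_y(a)\delta_y\rangle=j(a)(\beta\gamma^{-1}),
\]
since $d(\beta\gamma^{-1})=y$. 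Thus every entry of every operator $\lambda_x(a)$ is a value of $j(a)$. If $j(a)=0$, then all of these matrix coefficients vanish, forcing $\lambda_x(a)=0$ for every $x\in G^{(0)}$, whence $\lVert a\rVert_r=0$ and $a=0$. The main obstacle I anticipate is verifying the intertwining identity and confirming that it passes from $C_c(G)$ to the completion, together with the bookkeeping of domains and ranges needed to ensure that $\beta\gamma^{-1}$ and the various products are composable.
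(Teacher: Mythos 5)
The paper offers no proof of this proposition; it is stated as well-known, so there is nothing internal to compare against. Your argument is correct and complete: the computation showing $j|_{C_c(G)}=\mathrm{id}$, the Cauchy--Schwarz bound, the density argument placing $j(a)$ in $C_0(G)$, and the injectivity step via the right-translation unitaries $U_\gamma\colon\ell^2(G_x)\to\ell^2(G_y)$ recovering every matrix coefficient $\langle\delta_\beta|\lambda_x(a)\delta_\gamma\rangle=j(a)(\beta\gamma^{-1})$ are all sound, and this is the standard proof of the result. (Note that you have implicitly corrected the paper's formula for $\lambda_x(f)\delta_\alpha$, which should read $\sum_{\beta\in G_{r(\alpha)}}f(\beta)\delta_{\beta\alpha}$; with the formula as literally printed the identity $j(f)=f$ would not come out.)
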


\begin{rem}
	Since $j\colon C^*_r(G)\to C_0(G)$ is injective,
	we may identify $j(a)$ with $a$.
	Hence, we often regard $a$ as a function on $G$ and simply denote $j(a)$ by $a$.
\end{rem}

	For $a\in C^*_r(G)$,
	we denote the open support of $a$ by
	\[
	\supp^{\circ}(a)\defeq \{\alpha\in G\mid a(\alpha)\not=0\}.
	\]
	Note that $\supp^{\circ}(a)$ is open in $G$.

\begin{defi}
	Let $A$ be a C*-algebra and $D\subset A$ be a C*-subalgebra.
	An element $n\in A$ is called a normalizer for $D$ if $nDn^*\cup n^*Dn\subset D$ holds.
	We denote the set of normalizers for $D$ by $N(A, D)$.
	
\end{defi}	

\begin{prop}[{\cite[Proposition 4.8]{renault}}] \label{prop open support of normaliser is a bisection}
	Let $G$ be an \'etale groupoid and $U\subset G$ be an open set.
	If $U$ is a bisection, then every elements in $C_c(U)$ is a normalizer for $C_0(G^{(0)})$.
	Conversely, if $n\in C^*_r(G)$ is a normalizer for $C_0(G^{(0)})$ and $G$ is effective,
	then $\supp^{\circ}(n)\subset G$ is an open bisection.
\end{prop}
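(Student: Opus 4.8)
The plan is to prove the two implications separately, the second being the substantial one. For the forward direction, let $U$ be an open bisection and $f\in C_c(U)$; I want to show $f\in N(C^*_r(G),C_0(G^{(0)}))$, i.e. that $f*g*f^*$ and $f^**g*f$ lie in $C_0(G^{(0)})$ for $g\in C_0(G^{(0)})$. Since $C_c(G^{(0)})$ is dense in $C_0(G^{(0)})$ and multiplication is norm-continuous, it suffices to treat $g\in C_c(G^{(0)})$, where everything sits in $C_c(G)$ and the convolution formula applies directly. Expanding $f*g*f^*(\gamma)=\sum_{\alpha\beta=\gamma}(f*g)(\alpha)\,\overline{f(\beta^{-1})}$ and using that $g$ is supported on units, the surviving terms force $\alpha,\beta^{-1}\in U$ with $d(\alpha)=d(\beta^{-1})$; injectivity of $d|_U$ then gives $\beta=\alpha^{-1}$, so $\gamma=\alpha\alpha^{-1}\in G^{(0)}$. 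Hence $\supp^{\circ}(f*g*f^*)\subset G^{(0)}$, and symmetrically for $f^**g*f$; this part is routine.

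For the converse, let $n\in C^*_r(G)$ be a normalizer and $G$ effective. First I would record that $n^*n,nn^*\in C_0(G^{(0)})$: taking an approximate unit $(u_i)$ of $C_0(G^{(0)})$, the elements $n^*u_in$ lie in $C_0(G^{(0)})$ and converge to $n^*n$, which is therefore in $C_0(G^{(0)})$ by closedness, and likewise for $nn^*$. Next, for $f\in C_c(G^{(0)})$ I would evaluate $n^*fn$ on units via Proposition \ref{prop evaluation}: since $\lambda_x(n)\delta_x=\sum_{\zeta\in G_x}n(\zeta)\delta_\zeta$ and $\lambda_x(f)$ is multiplication by $f\circ r$, one gets
\[
(n^*fn)(x)=\langle \lambda_x(n)\delta_x \mid \lambda_x(f)\lambda_x(n)\delta_x\rangle=\sum_{\zeta\in G_x}|n(\zeta)|^2 f(r(\zeta)),
\]
so $f\mapsto (n^*fn)(x)$ is integration against the finite positive measure $\mu_x=\sum_{\zeta\in G_x}|n(\zeta)|^2\delta_{r(\zeta)}$ on $G^{(0)}$.

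The heart of the argument is to show $\mu_x$ is a point mass. Since $C_0(G^{(0)})$ is commutative and $nn^*\in C_0(G^{(0)})$, for $f,g\in C_c(G^{(0)})$ there is the algebraic identity $(n^*fn)(n^*gn)=n^*\!\big(f(nn^*)g\big)n$ in $C^*_r(G)$; evaluating at $x$ (where the product in $C_0(G^{(0)})$ is pointwise) yields $\mu_x(f)\mu_x(g)=\mu_x\!\big(fg\cdot(nn^*)\big)$. If $\mu_x$ had two distinct atoms $y_1\neq y_2$, choosing $f,g\geq 0$ in $C_c(G^{(0)})$ with disjoint supports and $f(y_1),g(y_2)>0$ makes the left-hand side strictly positive while $fg=0$ makes the right-hand side zero, a contradiction. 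Hence all $\zeta\in G_x$ with $n(\zeta)\neq 0$ share a single range; equivalently, for $\gamma_1,\gamma_2\in\supp^{\circ}(n)$, $d(\gamma_1)=d(\gamma_2)$ forces $r(\gamma_1)=r(\gamma_2)$.

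Finally I would bring in effectiveness. Suppose $\gamma_1\neq\gamma_2$ in $\supp^{\circ}(n)$ with $d(\gamma_1)=d(\gamma_2)$; by the previous step also $r(\gamma_1)=r(\gamma_2)$. Choosing disjoint open bisections $V_i\ni\gamma_i$ inside $\supp^{\circ}(n)$ with common source image $W=d(V_1)=d(V_2)$, the unique lifts $\sigma_i(x)\in V_i$ over each $x\in W$ satisfy $r(\sigma_1(x))=r(\sigma_2(x))$, so $\sigma_1(x)\sigma_2(x)^{-1}\in\Iso(G)$. The set $V_1V_2^{-1}$ is a product of open bisections, hence an open bisection, and it equals $\{\sigma_1(x)\sigma_2(x)^{-1}:x\in W\}\subset\Iso(G)$; therefore $V_1V_2^{-1}\subset\Int(\Iso(G))=G^{(0)}$. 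But $\gamma_1\gamma_2^{-1}\in V_1V_2^{-1}$, and $\gamma_1\neq\gamma_2$ together with $d(\gamma_1)=d(\gamma_2)$ gives $\gamma_1\gamma_2^{-1}\notin G^{(0)}$, a contradiction. Thus $d|_{\supp^{\circ}(n)}$ is injective, and applying the same reasoning to the normalizer $n^*$, whose open support is $\supp^{\circ}(n)^{-1}$, yields injectivity of $r|_{\supp^{\circ}(n)}$, so $\supp^{\circ}(n)$ is a bisection. The step I expect to be the main obstacle is making the function-level computations of the middle two paragraphs rigorous for a general $n\in C^*_r(G)$ rather than $n\in C_c(G)$ — justifying the evaluation formula and the quadratic identity through the possibly infinite fibre sums — and confirming that $V_1V_2^{-1}$ is genuinely open so that effectiveness can be applied.
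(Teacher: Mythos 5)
Your argument is correct. Note that the paper itself does not prove this proposition; it is quoted from Renault's \emph{Cartan subalgebras in C*-algebras} (Proposition 4.8), and your proof is essentially a self-contained reconstruction of the argument there: the forward direction by the convolution computation on an open bisection, and the converse by (i) showing $n^*n,nn^*\in C_0(G^{(0)})$ via an approximate unit, (ii) identifying $f\mapsto (n^*fn)(x)$ with integration against the discrete measure $\mu_x=\sum_{\zeta\in G_x}\lvert n(\zeta)\rvert^2\delta_{r(\zeta)}$ and using the identity $n^*fnn^*gn=n^*(f(nn^*)g)n$ to force $\mu_x$ to be concentrated at a single point, and (iii) upgrading ``common range over each fibre'' to injectivity of $d$ on $\supp^{\circ}(n)$ by exhibiting the open bisection $V_1V_2^{-1}$ inside $\Iso(G)$ and invoking effectiveness, then dualizing via $n^*$. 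All the steps you flag as potential obstacles do go through: the evaluation formula of Proposition \ref{prop evaluation} gives $\lambda_x(n)\delta_x=\sum_{\zeta\in G_x}n(\zeta)\delta_\zeta$ in $\ell^2(G_x)$ for arbitrary $n\in C^*_r(G)$, so the fibre sums converge absolutely, and $V_1V_2^{-1}$ is open because $\Bis(G)$ is closed under products. The only point worth making explicit in a write-up is that an approximate unit of $C_0(G^{(0)})$ is an approximate unit for $C^*_r(G)$, which justifies $n^*u_in\to n^*n$.
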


We will use the approximation of normalisers by intertwiners.

\begin{prop}[{\cite[Proposition 3.4]{DonsigPitts2008}}] \label{prop approximation of normalisers by intertwiners}
	Let $A$ be a C*-algebra and $D\subset A$ be a commutative C*-subalgebra.
	An element $n\in A$ is called an intertwiner for $D$ if $nD=Dn$ holds.
	Let $\mathcal{I}$ be the set of intertwiners for $D$.
	If $D\subset A$ is a maximal abelian subalgebra,
	then $N(A, D)=\overline{\mathcal{I}}$.
\end{prop}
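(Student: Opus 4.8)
The plan is to prove the two inclusions $\overline{\mathcal I}\subseteq N(A,D)$ and $N(A,D)\subseteq\overline{\mathcal I}$ separately, after first observing that $N(A,D)$ is norm\nobreakdash-closed: if $n_k\to n$ with $n_k\in N(A,D)$, then for $d\in D$ we have $n_k d n_k^*\to n d n^*$ with each $n_k d n_k^*\in D$, so $ndn^*\in D$ because $D$ is closed, and likewise $n^*dn\in D$. Hence $\overline{\mathcal I}\subseteq N(A,D)$ will follow once I check $\mathcal I\subseteq N(A,D)$. For this, let $n$ be an intertwiner; taking adjoints in $nD=Dn$ gives $n^*D=Dn^*$, and combining these relations with maximal abelianness forces $n^*n,nn^*\in D$ (for $d\in D$ one gets $nn^*d=\alpha(d)\,nn^*$ for a suitable $\alpha(d)\in D$, and the commutation $[nn^*,D]=0$ extracted from this places $nn^*$ in the relative commutant $D^c:=\{a\in A:ad=da\ \forall d\in D\}$, which equals $D$ precisely because $D$ is maximal abelian). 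Writing $dn^*=n^*d'$ with $d'\in D$, I then obtain $ndn^*=(nn^*)d'\in D$, and symmetrically $n^*dn\in D$, so $n\in N(A,D)$.

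The substance is the inclusion $N(A,D)\subseteq\overline{\mathcal I}$, and its first key step — the place where maximal abelianness is essential — is the claim that every normalizer $n$ satisfies $h:=n^*n\in D$ (and $nn^*\in D$). Assuming $D$ contains an approximate unit $(e_\lambda)$ of $A$ (automatic when $A$ is unital, since then $1\in D^c=D$), this is clean: $e_\lambda n\to n$ gives $n^*e_\lambda n\to n^*n=h$, while each $n^*e_\lambda n$ lies in $D$ because $n$ is a normalizer and $e_\lambda\in D$; as $D$ is closed, $h\in D$. This is the step I would state as a separate lemma.

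With $h\in D$ in hand, I would regularize $n$ by continuous functional calculus inside $D$. For $\epsilon>0$ choose $g_\epsilon\in C_c((0,\infty))$ with $0\le g_\epsilon\le1$ and $g_\epsilon=1$ on $[\epsilon,\lVert h\rVert]$, and set $n_\epsilon:=n\,g_\epsilon(h)\in A$, which makes sense since $g_\epsilon(h)\in D$. Norm convergence is immediate from the functional calculus: working in the unitization,
\[
\lVert n-n_\epsilon\rVert^2=\bigl\lVert(1-g_\epsilon(h))\,h\,(1-g_\epsilon(h))\bigr\rVert=\sup_{t\in\mathrm{sp}(h)}t\,(1-g_\epsilon(t))^2\le\epsilon,
\]
because the summand vanishes for $t\ge\epsilon$ and is at most $t<\epsilon$ otherwise. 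So $n_\epsilon\to n$, and it remains to see that each $n_\epsilon$ is an intertwiner, i.e.\ $n_\epsilon D=Dn_\epsilon$.

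This last verification is where I expect the main obstacle. The normalizer $n$ induces a partial homeomorphism $\tau$ of the spectrum $\widehat D$, defined on the open set $\{h\neq0\}$, through relations of the form $n^*dn=(d\circ\tau)\,h$; the reason $n$ itself is generally only a normalizer and \emph{not} an intertwiner is exactly that $\tau$ need not extend continuously across the boundary of its domain, so that $f\circ\tau^{-1}$ can fail to lie in $C_0(\widehat D)=D$ for general $f\in D$. The role of the cut-off is to repair this: the open support of $g_\epsilon(h)$ sits inside the compact set $\{h\ge\epsilon\}$, which lies in the interior of the domain of $\tau$ and is bounded away from its boundary, so conjugation by $n$ transports the relevant $C_0$\nobreakdash-functions back into $D$ with no boundary obstruction, giving $n_\epsilon d=\theta(d)\,n_\epsilon$ with $\theta(d)\in D$ and hence $n_\epsilon\in\mathcal I$. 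Making this transport argument precise — tracking that the compactly supported regularization genuinely turns the set equality $n_\epsilon D=Dn_\epsilon$ into an honest identity rather than merely an approximate one — is the technical heart of the proof; the lemma $n^*n\in D$ above is the other point at which the hypothesis that $D$ is maximal abelian, rather than merely commutative, is indispensable.
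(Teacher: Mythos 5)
The paper offers no proof of this proposition: it is quoted as an external result from Donsig--Pitts, so there is nothing internal to compare your argument against; I can only judge it on its own terms. Your overall architecture is the right one (it is in fact the architecture of the Donsig--Pitts proof): show $N(A,D)$ is closed and $\mathcal I\subseteq N(A,D)$ for the easy inclusion, then approximate a normalizer $n$ by $n_\epsilon:=n\,g_\epsilon(n^*n)$ after first establishing $n^*n\in D$. The closedness of $N(A,D)$ and the norm estimate $\lVert n-n_\epsilon\rVert^2=\sup_{t\in\mathrm{sp}(h)}t(1-g_\epsilon(t))^2\le\epsilon$ are correct as written.

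There are, however, two genuine gaps. The minor one is in the easy inclusion: from $nD=Dn$ you only get a one-sided relation of the form $nn^*d=\alpha(d)\,nn^*$, and the assertion that commutation $[nn^*,D]=0$ can be ``extracted from this'' is not a proof --- passing from $nn^*D=Dnn^*$ (a set equality, which does follow, since $nn^*D=n(n^*D)=n(Dn^*)=(Dn)n^*=Dnn^*$) to elementwise commutation of the positive element $nn^*$ with $D$ requires its own argument before maximal abelianness can be invoked. The major gap is that the hard inclusion $N(A,D)\subseteq\overline{\mathcal I}$ --- the actual content of the proposition --- is only described, not proved: you state that verifying $n_\epsilon D=Dn_\epsilon$ is ``the technical heart of the proof'' and then stop there. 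What is needed is precisely this verification: for a normalizer $n$ with $h=n^*n\in D$, one uses the partial homeomorphism $\tau$ of $\widehat D$ implemented by $n$ (via $n^*dn=(d\circ\tau)h$ on the open support of $h$) to produce, for each $d\in D$, an explicit $\theta(d)\in C_0(\widehat D)=D$ with $n_\epsilon d=\theta(d)n_\epsilon$; the point to be checked is that because $g_\epsilon(h)$ is supported in the compact set $\{h\ge\epsilon'\}$ sitting inside the open domain of $\tau$, the transported function extends by zero to a genuinely continuous element of $C_0(\widehat D)$, and that the two resulting one-sided containments combine to the set equality $n_\epsilon D=Dn_\epsilon$. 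Until that step is written out, the proposal is an accurate roadmap of the Donsig--Pitts argument rather than a proof.
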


	\subsection{Inverse semigroups}

	We recall the basic notions about inverse semigroups,
	although we will not often utilize inverse semigroups.
	We mainly utilize inverse semigroups to construct examples of \'etale groupoids.
	See \cite{lawson1998inverse} or \cite{paterson2012groupoids} for more details.
	An inverse semigroup $S$ is a semigroup where for every $s\in S$ there exists a unique $s^*\in S$ such that $s=ss^*s$ and $s^*=s^*ss^*$.
	We denote the set of all idempotents in $S$ by $E(S)\defeq\{e\in S\mid e^2=e\}$.
	It is known that $E(S)$ is a commutative subsemigroup of $S$.
	An inverse semigroup which consists of idempotents is called a (meet) semilattice of idempotents.
	A zero element is a unique element $0\in S$ such that $0s=s0=0$ holds for all $s\in S$.
	An inverse semigroup with a unit is called an inverse monoid.
	By a subsemigroup of $S$,
	we mean a subset of $S$ that is closed under the product and inverse of $S$.
	
	For a topological space $X$,
	we denote by $I_X$ the set of all homeomorphisms between open sets in $X$.
	Then $I_X$ is an inverse semigroup with respect to the product defined by the composition of maps.
	For an inverse semigroup $S$,
	an inverse semigroup action $\alpha\colon S\curvearrowright X$ is a semigroup homomorphism $S\ni s\mapsto \alpha_s\in I_X$.
	In this paper, we always assume that every action $\alpha$ satisfies $\bigcup_{e\in E(S)}\dom(\alpha_e)=X$.
	
	\subsection{Inverse semigroups associated to inclusions of C*-algebras}
		
	Following \cite[Proposition 13.3]{noncommutativeCartanExel},
	we define inverse semigroups of slices.
	
\begin{defi}
	Let $D\subset A$ be an inclusion of C*-algebras.
	A slice  is a norm closed subspace $M\subset A$ such that $DM\cup MD\subset M$ and $M\subset N(A, D)$.
	The set of all slices is denoted by $\mathcal{S}(A,D)$.
\end{defi}

\begin{prop}{{\cite[Proposition 13.3]{noncommutativeCartanExel}}}
	Let $D\subset A$ be an inclusion of C*-algebras.
	Assume that $D$ has an approximate unit for $A$.
	For $M,N\in \mathcal{S}(A,D)$,
	define $MN$ to be the closure of the linear span of
	\[
	\{xy\in A\mid x\in M, y\in N \}.
	\]
	Then $\mathcal{S}(A,B)$ is an inverse semigroup under this operation.
	The generalized inverse of $M\in \mathcal{S}(A, D)$ is $M^*\defeq\{x^*\in A\mid x\in M\}$.
\end{prop}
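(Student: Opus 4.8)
The plan is to verify the inverse-semigroup axioms in the order: closure of $\mathcal{S}(A,D)$ under the two operations, associativity, the factorization identities $MM^*M=M$ and $M^*MM^*=M^*$, and finally the commutativity of idempotents. First I would check that $MN$ and $M^*$ are again slices. That $M^*$ is a slice is immediate: the involution is an isometric conjugate-linear bijection, so $M^*$ is a closed subspace, while $DM^*=(MD)^*\subset M^*$ and $M^*D=(DM)^*\subset M^*$ follow from $MD\subset M$, $DM\subset M$ and $D^*=D$, and $N(A,D)$ is visibly closed under the involution. For $MN$, the module conditions $D(MN)\cup(MN)D\subset MN$ come from $DM\subset M$ and $ND\subset N$ together with continuity of multiplication, while $MN\subset N(A,D)$ reduces to the fact that a product of normalizers is a normalizer, $(xy)D(xy)^*\subset xDx^*\subset D$, and that $N(A,D)$ is norm-closed. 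Associativity $(MN)P=M(NP)$ is then the routine observation that both sides equal the closed linear span of the triple products $\{xyz : x\in M,\ y\in N,\ z\in P\}$.

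The key technical input, and what I expect to be the main obstacle, is controlling the products $M^*M$ and $MM^*$. The plan is to prove $M^*M\cup MM^*\subset D$ by a polarization argument that crucially uses the approximate unit. For a single normalizer $x$ one has $x^*x=\lim_\lambda x^*e_\lambda x\in D$, since $x^*e_\lambda x\in x^*Dx\subset D$ and $e_\lambda x\to x$. Given $x,y\in M$, applying this to $x+y$ and to $x+iy$ and combining yields $2x^*y\in D$, hence $M^*M\subset D$, and symmetrically $MM^*\subset D$. A short computation, using $MM^*\subset D$ to resolve the middle factor of $(x^*y)(z^*w)=x^*(yz^*)w$, then shows that $M^*M$ is a closed two-sided ideal of $D$ (in particular a C*-subalgebra), and likewise for $MM^*$; this ideal structure will be the hook for the idempotent analysis.

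With this in hand I would establish $MM^*M=M$. The inclusion $MM^*M\subset M$ is immediate from $MM^*\subset D$ and $DM\subset M$. For the reverse inclusion, fix $x\in M$; since $x^*x\in M^*M$ and $M^*M$ is a C*-subalgebra of $D$, functional calculus places $(x^*x)^{1/n}\in M^*M$, and the standard estimate $\|x-x(x^*x)^{1/n}\|^2=\sup_{t\in\mathrm{sp}(x^*x)}t(1-t^{1/n})^2\to 0$ shows $x(x^*x)^{1/n}\to x$ with $x(x^*x)^{1/n}\in MM^*M$. Thus $M\subset\overline{MM^*M}=MM^*M$, and $M^*MM^*=M^*$ follows by symmetry, so $M^*$ is a generalized inverse of $M$ and $\mathcal{S}(A,D)$ is regular.

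It remains to see that idempotents commute, which upgrades this regular $*$-semigroup to an inverse semigroup and forces the generalized inverse to be unique. The crux here is to show that every idempotent slice $E$ lies in $D$. Using $E=EE$ and $EE^*\subset D$ one gets $EE^*=E(EE^*)\subset ED\subset E$, and symmetrically $E^*E\subset E^*$; then for $x\in E$ the elements $(x^*x)^{1/n}\in E^*E\subset E^*$ give $x(x^*x)^{1/n}\in EE^*$ with $x(x^*x)^{1/n}\to x$, whence $E=EE^*\subset D$. Consequently each idempotent is a closed two-sided ideal of $D$, and for closed ideals of a C*-algebra one has $EF=E\cap F=FE$ (via an approximate unit of $E\cap F$), so idempotents commute. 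Combining regularity with commuting idempotents yields the inverse-semigroup structure, with $M\mapsto M^*$ as the inversion.
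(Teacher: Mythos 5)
The paper offers no proof of this statement---it is quoted verbatim from Exel's Proposition 13.3---so your argument can only be judged on its own terms. Its overall architecture (closure of $\mathcal{S}(A,D)$ under the two operations, associativity, $M^*M\cup MM^*\subset D$ by polarization against the approximate unit, $MM^*M=M$ via $x(x^*x)^{1/n}\to x$, and idempotents being closed two-sided ideals of $D$, hence commuting) is sound and is essentially the standard route.

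There is, however, one genuine gap, at the very first step: the claim that $MN\subset N(A,D)$ ``reduces to the fact that a product of normalizers is a normalizer'' together with norm-closedness of $N(A,D)$. Since $MN$ is by definition the \emph{closed linear span} of the products $xy$, you must also show that a finite sum $\sum_i x_iy_i$ with $x_i\in M$, $y_i\in N$ is a normalizer, and $N(A,D)$ is not closed under addition: in $M_2(\C)$ with $D$ the diagonal, $e_{11}$ and $e_{12}$ are normalizers but $n=e_{11}+e_{12}$ is not, since $n^*e_{11}n=e_{11}+e_{12}+e_{21}+e_{22}\notin D$. The obstruction is the cross terms $x_i(y_i d y_j^*)x_j^*$, and it is removed by exactly the polarization trick you deploy two paragraphs later: because $M$ is a linear subspace contained in $N(A,D)$, expanding $(x_i+x_j)d(x_i+x_j)^*\in D$ and $(x_i+ix_j)d(x_i+ix_j)^*\in D$ yields $x_i d x_j^*\in D$ for all $x_i,x_j\in M$ and $d\in D$, and likewise $y_i^* d y_j\in D$ for $y_i,y_j\in N$; with these in hand, $\bigl(\sum_i x_iy_i\bigr)d\bigl(\sum_j x_jy_j\bigr)^*=\sum_{i,j}x_i\bigl(y_i d y_j^*\bigr)x_j^*\in D$, and symmetrically for the condition with the adjoint on the left. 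Once this is inserted, the remainder of your argument goes through as written.
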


Let $G$ be an \'etale groupoid and $\mathrm{Bis}(G)$ denote the set of all open bisections in $G$.
For $U, V\in \mathrm{Bis}(G)$,
their product is defined by
\[
UV\defeq \{\alpha\beta\in G\mid \alpha\in U, \beta\in V, d(\alpha)=r(\beta)\}.
\]
Then $UV\in\mathrm {Bis}(G)$ and $\mathrm{Bis}(G)$ is an inverse semigroup.
Note that $U^*\in \mathrm{Bis}(G)$ is given by
\[
U^{-1}\defeq \{\alpha^{-1}\in G\mid \alpha\in U\}.
\]

Although the next result seems to be well-known,
we include a proof for the reader's convenience.

\begin{prop}\label{prop homomorphism from Bis to slices}
	Let $G$ be an \'etale groupoid.
	Then the map 
	\[\Psi\colon \mathrm{Bis}(G)\ni U\mapsto \overline{C_c(U)}\in \mathcal{S}(C^*_r(G), C_0(G^{(0)})) \]
	is an injective semigroup homomorphism.
\end{prop}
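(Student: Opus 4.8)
The plan is to check three things in turn: that $\Psi$ actually lands in $\mathcal{S}(C^*_r(G),C_0(G^{(0)}))$, that $\Psi$ is multiplicative, and that $\Psi$ is injective. For well-definedness, fix $U\in\mathrm{Bis}(G)$. The subspace $\overline{C_c(U)}$ is norm closed by construction, so only the two defining properties of a slice remain. For the bimodule condition I would compute the convolution directly: for $e\in C_c(G^{(0)})$ and $f\in C_c(U)$ one gets $e*f(\gamma)=e(r(\gamma))f(\gamma)$ and $f*e(\gamma)=f(\gamma)e(d(\gamma))$, both supported in $U$, so $C_c(G^{(0)})*C_c(U)\cup C_c(U)*C_c(G^{(0)})\subset C_c(U)$; passing to closures (multiplication is continuous) gives $C_0(G^{(0)})\overline{C_c(U)}\cup\overline{C_c(U)}C_0(G^{(0)})\subset\overline{C_c(U)}$. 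For the normalizer condition, Proposition \ref{prop open support of normaliser is a bisection} yields $C_c(U)\subset N(C^*_r(G),C_0(G^{(0)}))$, and since the normalizer set is norm closed (a limit of normalizers is a normalizer, by continuity of multiplication and involution together with closedness of $C_0(G^{(0)})$), we conclude $\overline{C_c(U)}\subset N(C^*_r(G),C_0(G^{(0)}))$. Hence $\Psi(U)$ is a slice.

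For multiplicativity I would prove $\Psi(UV)=\Psi(U)\Psi(V)$ by two inclusions. For $f\in C_c(U)$ and $g\in C_c(V)$, the unique factorization of elements of the bisection $UV$ shows $f*g$ is supported in $UV$, hence $f*g\in C_c(UV)$; by continuity every product $xy$ with $x\in\overline{C_c(U)}$, $y\in\overline{C_c(V)}$ lies in $\overline{C_c(UV)}$, and taking the closed linear span gives $\Psi(U)\Psi(V)\subset\Psi(UV)$. For the reverse inclusion I would factorize: set $W\defeq d(U)\cap r(V)$, an open subset of $G^{(0)}$, and observe that $\Phi\colon w\mapsto (d|_U)^{-1}(w)\,(r|_V)^{-1}(w)$ is a homeomorphism of $W$ onto $UV$ (using that $d|_U,r|_V$ are homeomorphisms and that multiplication is continuous). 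Given $h\in C_c(UV)$, I choose $g\in C_c(V)$ equal to $1$ on the compact set $(r|_V)^{-1}(\Phi^{-1}(\supp h))$, and define $f\in C_c(U)$ by $f((d|_U)^{-1}(w))\defeq h(\Phi(w))$ for $w\in W$ and $f=0$ off $(d|_U)^{-1}(W)$; because $h$ vanishes near the boundary of $W$, the function $f$ is continuous and compactly supported. Evaluating the convolution through the unique factorization then gives $f*g=h$, so $C_c(UV)\subset\Psi(U)\Psi(V)$ and, by closedness, $\Psi(UV)\subset\Psi(U)\Psi(V)$.

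For injectivity the idea is to recover $U$ from the slice via the evaluation map $j$ of Proposition \ref{prop evaluation}. For $a\in\overline{C_c(U)}$, writing $a=\lim_k f_k$ with $f_k\in C_c(U)$ and using that $j$ is norm decreasing and restricts to the identity on $C_c(G)$, I get $f_k=j(f_k)\to j(a)$ uniformly, so $j(a)$ vanishes on the closed set $G\setminus U$; that is, $\supp^{\circ}(a)\subset U$. Conversely, each $\gamma\in U$ lies in $\supp^{\circ}(f)$ for a suitable $f\in C_c(U)$ by Urysohn's lemma, so $U=\bigcup_{a\in\overline{C_c(U)}}\supp^{\circ}(a)$. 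Since the right-hand side depends only on the slice $\Psi(U)$, the equality $\Psi(U)=\Psi(V)$ forces $U=V$.

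The convolution identities and the closedness statements are routine; the one genuinely delicate step is the exact factorization $f*g=h$ in the reverse inclusion of multiplicativity, where I must arrange $f$ to be continuous and compactly supported even though it is defined piecewise across the boundary of $W$. This is exactly the point where the compactness of $\supp h$ inside the open homeomorphic copy $UV\cong W$ is essential.
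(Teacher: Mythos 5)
Your proof is correct, and its well-definedness and injectivity steps essentially match the paper's (the paper also verifies membership in $\mathcal{S}(C^*_r(G),C_0(G^{(0)}))$ via Proposition \ref{prop open support of normaliser is a bisection} and proves injectivity by evaluating at a point $\gamma\in U\setminus V$ using Urysohn's lemma and the map $j$ of Proposition \ref{prop evaluation}; your reformulation as ``$U=\bigcup_{a\in\Psi(U)}\supp^{\circ}(a)$ is recoverable from the slice'' is the same idea). The genuine divergence is in the reverse inclusion $\overline{C_c(UV)}\subset\overline{C_c(U)}\cdot\overline{C_c(V)}$: the paper observes that the reduced norm and the supremum norm agree on $\overline{C_c(W)}$ for a bisection $W$ and then invokes the Stone--Weierstrass theorem to see that the closed span of products $f*g$ exhausts $C_0(UV)$, whereas you produce an exact factorization $h=f*g$ by transporting $h$ along the homeomorphism $w\mapsto (d|_U)^{-1}(w)(r|_V)^{-1}(w)$ from $W=d(U)\cap r(V)$ onto $UV$ and cutting off with a Urysohn function $g$ on $V$. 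Your factorization checks out: the unique decomposition of elements of $UV$ guarantees the convolution sum has at most one term, and the compactness of $\supp h$ inside $UV\cong W$ ensures $f$ extends continuously by zero with compact support, exactly as you flag. The trade-off is that the Stone--Weierstrass route is shorter and avoids the boundary bookkeeping, while your argument is more constructive, shows that no closure is even needed at the level of $C_c$ (every element of $C_c(UV)$ is literally a single product), and does not rely on identifying the closed span of products as a subalgebra of $C_0(UV)$ separating points.
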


\begin{proof}
	By Proposition \ref{prop open support of normaliser is a bisection},
	$\Psi$ is well-defined.
	First, we show $\overline{C_c(UV)}=\overline{C_c(U)}\cdot\overline{C_c(V)}$ for $U,V\in \Bis(G)$.
	One can check $\overline{C_c(U)}\cdot \overline{C_c(V)}\subset \overline{C_c(UV)}$ by direct calculations.
	Since the reduced norm and the supremum norm coincide on $\overline{C_c(W)}$ for all $W\in \Bis(G)$,
	we obtain $\overline{C_c(U)}\cdot \overline{C_c(V)}=\overline{C_c(UV)}$ by applying Stone-Weierstrass theorem.
	It is clear that $\Psi$ preserves the inverses.\footnote{We need not check that $\Psi$ preserves the inverses, since semigroup homomorphisms between inverse semigroups automatically preserve the inverses.}
	
	Next, we show that $\Psi$ is injective.
	Assume that $U, V\in \Bis(G)$ are different.
	We may assume that there exists $\gamma\in U$ such that $\gamma\not\in V$.
	Consider the map
	\[
	\ev_{\gamma}\colon C^*_r(G)\ni a\mapsto a(\gamma) \in\C.
	\]
	Then $\ev_{\gamma}$ is not zero on $\overline{C_c(U)}$ by Urysohn's lemma.
	On the other hand,
	$\ev_{\gamma}$ is zero on $\overline{C_c(V)}$.
	Hence, $\Psi(U)\not=\Psi(V)$ and we have shown that $\Psi$ is injective.
	\qed
 \end{proof}

\subsection{\'Etale groupoids associated to inverse semigroup actions}

Many \'etale groupoids arise from actions of inverse semigroups to topological spaces.
We recall how to construct an \'etale groupoid from an inverse semigroup action.

Let $X$ be a locally compact Hausdorff space.
Recall that $I_X$ is the inverse semigroup of homeomorphisms between open sets in $X$.
For $e\in E(S)$, we denote the domain of $\alpha_e$ by $D_e^{\alpha}$.
Then $\alpha_s$ is a homeomorphism from $D_{s^*s}^{\alpha}$ to $D_{ss^*}^{\alpha}$.
We often omit $\alpha$ of $D_{e}^{\alpha}$ if there is no chance to confuse.

For an action $\alpha\colon S\curvearrowright X$,
we associate an \'etale groupoid $S\ltimes_{\alpha}X$ as the following.
First we put the set $S*X\defeq \{(s,x) \in S\times X \mid x\in D^{\alpha}_{s^*s}\}$.
Then we define an equivalence relation $\sim$ on $S*X$ by declaring that $(s,x)\sim (t,y)$ holds if
\[
\text{$x=y$ and there exists $e\in E(S)$ such that $x\in D^{\alpha}_e$ and $se=te$}.  
\]
Set $S\ltimes_{\alpha}X\defeq S*X/{\sim}$ and denote the equivalence class of $(s,x)\in S*X$ by $[s,x]$.
The unit space of $S\ltimes_{\alpha}X$ is $X$, where $X$ is identified with the subset of $S\ltimes_{\alpha}X$ via the injective map
\[
X\ni x\mapsto [e,x] \in S\ltimes_{\alpha}X, x\in D^{\alpha}_e.
\]
The domain map and range maps are defined by
\[
d([s,x])=x, r([s,x])=\alpha_s(x)
\]
for $[s,x]\in S\ltimes_{\alpha}X$.
The product of $[s,\alpha_t(x)],[t,x]\in S\ltimes_{\alpha}X$ is $[st,x]$.
The inverse should be $[s,x]^{-1}=[s^*,\alpha_s(x)]$.
Then $S\ltimes_{\alpha}X$ is a groupoid in these operations.
For $s\in S$ and an open set $U\subset D_{s^*s}^{\alpha}$,
define 
\[[s, U]\defeq \{[s,x]\in S\ltimes_{\alpha}X\mid x\in U\}.\]
These sets form an open basis of $S\ltimes_{\alpha}X$.
In these structures,
$S\ltimes_{\alpha}X$ is a locally compact \'etale groupoid,
although $S\ltimes_{\alpha}X$ is not necessarily Hausdorff.
In this paper,
we only treat an inverse semigroup action $\alpha\colon S\curvearrowright X$ such that $S\ltimes_{\alpha}X$ becomes Hausdorff.

\subsection{Discrete group coactions on C*-algebras}\label{subsection Discrete group coactions on C*-algebras}

We recall the fundamental notions about discrete group coactions on C*-algebras.
Let $A$ be a C*-algebra and $\Gamma$ be a discrete group.
We denote the comultiplication of $C^*_r(\Gamma)$ by $\delta_{\Gamma}\colon C^*_r(\Gamma)\to C^*_r(\Gamma)\otimes C^*_r(\Gamma)$.
Note that we consider the minimal tensor product here and $\delta_{\Gamma}$ is the *-homomorphism such that $\delta_{\Gamma}(\lambda_s)=\lambda_s\otimes\lambda_s$ for all $s\in\Gamma$.
A coaction of $\Gamma$ on $A$ is a nondegenerate *-homomorphism $\delta\colon A\to A\otimes C^*_r(\Gamma)$ such that $(\id\otimes \delta_{\Gamma})\circ \delta=(\delta\otimes\id)\circ\delta$.
For $s\in\Gamma$,
\[A_s\defeq\{a\in A\mid \delta(a)=a\otimes\lambda_s\}\]
is called the spectral subspace for $s\in \Gamma$.
Then $A_s$ is a closed $A_e$-sub-bimodule in $A$.
In particular,
$A_e$ is a C*-subalgebra of $A$ and called the fixed point algebra of $\delta$. 
By \cite[Lemma 1.5]{quigg_1996},
we have $A=\overline{\Span} \bigcup_{s\in\Gamma}A_s$.
Moreover,
for $s\in\Gamma$,
define $\Phi_s\colon A\to A_s$ by $\Phi_s(a)=(\id\otimes \mathrm{tr})((1\otimes \lambda_{s^{-1}}) \delta(a))$ for $a\in A$,
where $\mathrm{tr}\in C^*_r(\Gamma)^*$ denotes the canonical trace of $C^*_r(\Gamma)$.
Then $\Phi_s$ is a norm decreasing $A_e$-bimodule map such that $\Phi_s|_{A_s}=\id$.
In particular, $\Phi_e$ is a conditional expectation.

We recall discrete group coactions on groupoid C*-algebras induced by cocycles.
See Appendix A of \cite{BrownFullerPitts2021} for more details.
Let $G$ be an \'etale groupoid, $\Gamma$ be a discrete group and $c\colon G\to\Gamma$ be a continuous cocycle (i.e.\ continuous groupoid homomorphism).
Then, by \cite[Lemma 6.1]{CARLSEN2021107923}, there exists a *-homomorphism $\delta_c\colon C^*_r(G)\to C^*_r(G)\otimes C^*_r(\Gamma)$ such that, if $f\in C_c(c^{-1}(s))$ holds for some $s\in\Gamma$, then $\delta_c(f)=f\otimes \lambda_s$.
Such a *-homomorphism $\delta_c$ is clearly unique and called the coaction of $\Gamma$ on $C^*_r(G)$ associated with the cocycle $c\colon G\to\Gamma$.

\section{Main theorems}

In this section,
we prove our main theorems.
In the first subsection,
we investigate $C_0(G^{(0)})$-submodules in $C^*_r(G)$.
Precisely,
assuming that $G$ is effective, we prove that a $C_0(G^{(0)})$-submodule $M\subset C^*_r(G)$ is of the form $\overline{C_0(U)}$ if and only if $M$ is a closed linear span of normalizers (Theorem \ref{theorem main theorem about sub module}).
Then we investigate inverse semigroups of slices (Corollary \ref{corollary inverse semigroup of bisections are inverse semigroup of slices}).
Finally, we prove that every coaction on $C^*_r(G)$ whose fixed point algebra contains $C_0(G^{(0)})$ is induced from a cocycle.

In the last subsection,
we prove the Galois correspondence result for discrete group coactions on groupoid C*-algebras (Corollary \ref{corollary galois corr for discrete group coactions}).

\subsection{$C_0(G^{(0)})$-submodules in $C_r^*(G)$}

In this subsection,
we investigate $C_0(G^{(0)})$-modules in $C^*_r(G)$.

\begin{lem}\label{lemma approximate normalizers by functions}
	Let $G$ be an effective \'etale groupoid.
	Assume that $m\in C^*_r(G)$ is a normalizer for $C_0(G^{(0)})$ and $\varepsilon>0$.
	Then there exists $f\in C_0(G^{(0)})$ such that
	\begin{itemize}
		\item $\lVert fm-m\rVert\leq \varepsilon$, and
		\item $fm\in C_c(\supp^{\circ}(m))$.
	\end{itemize}
	
\end{lem}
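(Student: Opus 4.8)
The plan is to realize the desired $f$ as a cutoff function applied to the ``size'' of $m$, namely to $mm^{\ast}$, and to exploit that everything in sight is supported on a single bisection. Since $m$ is a normalizer and $G$ is effective, Proposition \ref{prop open support of normaliser is a bisection} gives that $U\defeq\supp^{\circ}(m)$ is an open bisection. Identifying elements of $C^*_r(G)$ with functions via $j$, I would first record that for $f\in C_0(G^{(0)})$ the left regular representations compute $(fm)(\gamma)=f(r(\gamma))\,m(\gamma)$; thus left multiplication by $f$ is pointwise multiplication by $f\circ r$, and in particular $\supp^{\circ}(fm)\subset U$ and $\supp^{\circ}(fm-m)\subset U$. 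I would also use that, because $U$ is a bisection, each $\lambda_x(b)$ acts as a weighted partial isometry along $U$ whenever $\supp^{\circ}(b)\subset U$, so that $\lVert b\rVert_r=\lVert b\rVert_{\infty}$ for every such $b$; this is exactly the coincidence of reduced and supremum norms over a bisection already used in the proof of Proposition \ref{prop homomorphism from Bis to slices}.

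The heart of the argument is the function $g\defeq mm^{\ast}$. Evaluating the matrix coefficients $j(mm^{\ast})(\gamma)=\langle\delta_{\gamma}\,|\,\lambda_{d(\gamma)}(mm^{\ast})\delta_{d(\gamma)}\rangle$ and using that $U$ is a bisection (so that each fibre $G^{y}$ meets $U$ at most once), I expect to find that $j(mm^{\ast})$ vanishes off $G^{(0)}$, whence $g\in C_0(G^{(0)})$, and that
\[
g(r(\gamma))=\lvert m(\gamma)\rvert^{2}\quad\text{for }\gamma\in U,
\]
with $g$ vanishing outside $r(U)$. I would then fix a continuous $\phi_{\delta}\colon[0,\infty)\to[0,1]$ with $\phi_{\delta}(0)=0$, $\phi_{\delta}\equiv 1$ on $[\delta,\infty)$ and $\phi_{\delta}\equiv 0$ on $[0,\delta/2]$, and set $f\defeq\phi_{\delta}(g)\in C_0(G^{(0)})$. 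Since $\phi_{\delta}$ vanishes near $0$, the support of $f$ lies in the compact set $\{g\ge\delta/2\}\subset r(U)$; as $r|_U$ is a homeomorphism onto $r(U)$, its preimage is a compact subset of $U$ containing $\supp^{\circ}(fm)$, so $fm$ is a continuous function with compact support inside $U$, that is, $fm\in C_c(U)=C_c(\supp^{\circ}(m))$.

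Finally I would estimate the norm. Because $\supp^{\circ}(fm-m)\subset U$, the norm coincidence of the first paragraph gives
\[
\lVert fm-m\rVert_r=\sup_{\gamma\in U}\lvert\phi_{\delta}(g(r(\gamma)))-1\rvert\sqrt{g(r(\gamma))}.
\]
Writing $t=g(r(\gamma))\ge 0$, the quantity being supremized vanishes when $t\ge\delta$, while for $t<\delta$ it is at most $\sqrt{t}\le\sqrt{\delta}$; hence $\lVert fm-m\rVert_r\le\sqrt{\delta}$, and taking $\delta\le\varepsilon^{2}$ completes the proof. The one genuinely substantive step is the identification of $g=mm^{\ast}$ as an element of $C_0(G^{(0)})$ with $g\circ r=\lvert m\rvert^{2}$ on $U$: this is what turns the abstract normalizer $m$ into a concrete continuous modulus living on the unit space that can be truncated, after which the cutoff-and-estimate is routine. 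Effectiveness is used only through Proposition \ref{prop open support of normaliser is a bisection}, to ensure that $U$ is a bisection.
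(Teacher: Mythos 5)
Your proof is correct and follows essentially the same route as the paper: both arguments hinge on the fact that $mm^*\in C_0(G^{(0)})$ realizes $\lvert m\rvert^2$ on $r(\supp^{\circ}(m))$ and then cut $m$ off where this modulus is small. The only cosmetic differences are that the paper builds $f$ by Urysohn's lemma (equal to $1$ on $r(K)$ with $K=\{\lvert m\rvert\geq\varepsilon/\sqrt{2}\}$) rather than by functional calculus applied to $mm^*$, and estimates $\lVert fm-m\rVert$ via the C*-identity $\lVert fm-m\rVert^2=\lVert (fm-m)(fm-m)^*\rVert$, reducing to supremum norms inside $C_0(G^{(0)})$, instead of invoking the coincidence of the reduced and supremum norms for elements supported on a single bisection.
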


\begin{proof}
	In this proof,
	we denote the reduced norm of $C^*_{r}(G)$ (resp.\ the supremum norm of $C_0(G^{(0)})$) by $\lVert\cdot\rVert_{r}$ (resp.\ $\lVert\cdot\rVert_{\infty}$).
	%Since $\supp^{\circ}(m)$ is a bisection by {\Large 何か引用する},
	Put 
	\[
	K\defeq	\{\alpha\in\supp^{\circ}(m)\mid \lvert m(\alpha)\rvert\geq \varepsilon/\sqrt{2}\}.
	\]
	Then $K$ is a compact set.
	Using Urysohn's lemma,
	take $f\in C_c(r(\supp^{\circ}(m)))$ such that $f|_{r(K)}=1$ and $0\leq f\leq 1$.
	Then we have
	\[
	\lVert fmm^*-mm^*\rVert_{\infty}\leq \varepsilon^2/2.
	\]
	Indeed, if $x\in r(K)$,
	then $\lvert f mm^*(x)-mm^*(x)\rvert=0$.
	If $x\not\in r(K)$,
	then we have
	\begin{align*}
	mm^*(x)=
	\begin{cases}
	\lvert m(\alpha)\rvert^2 & (G^x\cap \supp^{\circ}(m)=\{\alpha\}), \\
	0 & (\text{otherwise}).
	\end{cases}
	\end{align*}
	Here, since $\supp^{\circ}(m)$ is a bisection by Proposition \ref{prop open support of normaliser is a bisection},
	$G^x\cap \supp^{\circ}(m)$ has one element at most and we denote it by $\alpha$.
	Since $\alpha\not\in K$ if it exists,
	we have  $mm^*(x)<\varepsilon^2/2$.
	Using this,
	we have $\lvert fmm^*(x)-mm^*(x)\rvert<\varepsilon^2/2$.
	Hence we obtain $\lVert fmm^*-mm^*\rVert_{\infty}\leq \varepsilon^2/2$.
	
	We have
	\begin{align*}
	\lVert fm-m\rVert_{r}^2&=\lVert (fm-m)(fm-m)^*\rVert_{r} \\
	&=\lVert fmm^*f-fmm^*-mm^*f+mm^*\rVert_{r}\\
	&\leq \lVert fmm^*f-fmm^*\rVert_{r}+\lVert mm^*-mm^*f\rVert_{r} \\
	&=\lVert fmm^*f-fmm^*\rVert_{\infty}+\lVert mm^*-mm^*f\rVert_{\infty}\leq \varepsilon^2.
	\end{align*}
	Here, we used the fact that the reduced norm and the supremum norm coincide with each others on $C_0(G^{(0)})$ in the last equality.
	
	Now, it remains to show $fm\in C_c(\supp^{\circ}(m))$.
	Since \[fm(\alpha)=f(r(\alpha))m(\alpha)\] holds for all $\alpha\in G$,
	we have $\supp (fm) \subset r|_{\supp^{\circ}(m)}^{-1}(\supp (f))$.
	Since $r|_{\supp^{\circ}(m)}$ is a homeomorphism onto the image and $\supp (f)$ is compact,
	$\supp (fm)$ is also compact subset of $\supp^{\circ}(m)$.
	Hence we have $fm\in C_c(\supp^{\circ}(m))$.
	\qed
\end{proof}

Now, we are ready to prove the first main theorem in this paper.

\begin{thm}\label{theorem main theorem about sub module}
	Let $G$ be an effective \'etale groupoid.
	Assume that $M\subset C^*_r(G)$ is a closed left $C_0(G^{(0)})$-submodule.
	Then there exists an open set $U\subset G$ such that $M=\overline{C_c(U)}$ if and only if $M$ is a closed linear span of normalizers.
\end{thm}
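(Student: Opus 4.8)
The plan is to prove the two implications separately, in both cases passing to local pieces supported in open bisections and exploiting the fact that a normalizer does not vanish on its open support.

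For the forward direction, suppose $M=\overline{C_c(U)}$. I would show that $C_c(U)$ is already contained in the linear span of the normalizers lying in $M$. Given $f\in C_c(U)$, its support is a compact subset of $U$, which (since $G$ is \'etale) is covered by finitely many open bisections $W_1,\dots,W_m\subset U$. Choosing a partition of unity subordinate to this cover and multiplying by $f$ writes $f=\sum_j f_j$ with $f_j\in C_c(W_j)\subset C_c(U)\subset M$. By Proposition \ref{prop open support of normaliser is a bisection} each $f_j$ is a normalizer, so $f$ is a finite sum of normalizers in $M$; taking closures shows that $M$ is the closed linear span of the normalizers it contains.

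For the converse, assume $M$ is the closed linear span of the normalizers it contains and set $U\defeq\bigcup\supp^{\circ}(n)$, the union taken over all normalizers $n\in M$; this is open. The inclusion $M\subset\overline{C_c(U)}$ is the easy half: for each normalizer $n\in M$, Lemma \ref{lemma approximate normalizers by functions} produces $f\in C_0(G^{(0)})$ with $fn$ arbitrarily close to $n$ and $fn\in C_c(\supp^{\circ}(n))\subset C_c(U)$, so $n\in\overline{C_c(U)}$, and hence the closed linear span of such $n$ lies in $\overline{C_c(U)}$. The reverse inclusion $C_c(U)\subset M$ (which suffices, as $M$ is closed) is the crux. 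Given $g\in C_c(U)$, a partition of unity argument as above reduces the claim to the case $g\in C_c(\supp^{\circ}(n))$ for a single normalizer $n\in M$. Here $\supp^{\circ}(n)$ is an open bisection by Proposition \ref{prop open support of normaliser is a bisection} and $n$ is nonvanishing on it, so the quotient $g/n$ is a compactly supported continuous function on $\supp^{\circ}(n)$; transporting it through the homeomorphism $r|_{\supp^{\circ}(n)}$ and extending by zero yields $f\in C_0(G^{(0)})$ with $f(r(\alpha))\,n(\alpha)=g(\alpha)$ for all $\alpha\in G$. Identifying elements of $C^*_r(G)$ with functions via the injective map $j$ of Proposition \ref{prop evaluation}, and using that $j(fn)(\alpha)=f(r(\alpha))\,j(n)(\alpha)$, I conclude $fn=g$; since $M$ is a left $C_0(G^{(0)})$-module and $n\in M$, this gives $g\in M$.

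The main obstacle is precisely this last inclusion $C_c(U)\subset M$: one must realize an arbitrary function supported in $\supp^{\circ}(n)$ by left-multiplying $n$ with an element of $C_0(G^{(0)})$. This rests on two points I would verify carefully — that $n$ vanishes nowhere on $\supp^{\circ}(n)$ (so that $g/n$ is well defined and compactly supported), and that left multiplication by $f\in C_0(G^{(0)})$ acts on the function picture as $(fn)(\alpha)=f(r(\alpha))\,n(\alpha)$, which follows from the diagonal action of $C_0(G^{(0)})$ in the regular representations together with the injectivity of $j$.
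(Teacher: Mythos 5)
Your proposal is correct and follows essentially the same route as the paper: partition of unity over open bisections plus Proposition \ref{prop open support of normaliser is a bisection} for the forward direction, and for the converse the same union $U$ of open supports, Lemma \ref{lemma approximate normalizers by functions} for $M\subset\overline{C_c(U)}$, and division of $g$ by the nonvanishing normalizer $n$ on its open bisection support, transported through $r$, to realize $g=fn$ with $f\in C_0(G^{(0)})$. The only cosmetic difference is that you make explicit the identity $(fn)(\alpha)=f(r(\alpha))n(\alpha)$, which the paper leaves as ``one can check.''
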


\begin{rem}
	We remark that $\overline{C_c(U)}$ is a $C_0(G^{(0)})$-bimodule.
	Hence, in the above situation, 
	if a closed left $C_0(G^{(0)})$-submodule $M\subset C^*_r(G)$ is a closed linear span of normalizers,
	then $M$ automatically becomes a $C_0(G^{(0)})$-bimodule.
\end{rem}
\begin{proof}[\sc{Proof of Theorem \ref{theorem main theorem about sub module}}]
	Let $U\subset G$ be an open set.
	Since a family of the open bisections is an open basis of $G$,
	one can see that $C_c(U)$ is a linear span of normalizers by a partition of unity argument and Proposition \ref{prop open support of normaliser is a bisection}.
	Hence, $\overline{C_c(U)}$ is a closed linear span of normalizers.
	
	Assume that $M\subset C^*_r(G)$ is a closed linear span of normalizers.
	It is sufficient to show that there exists an open set $U\subset G$ such that $M=\overline{C_c(U)}$.
	Put 
	\[
	U\defeq \{\alpha\in G\mid \text{$a(\alpha)\not=0$ for some $a\in M$}\}.
	\]
	Then  we have
	\[
	U=\bigcup_{a\in M}\supp^{\circ}(a)=\bigcup_{m\in M\cap N(C^*_r(G), C_0(G^{(0)}))}\supp^{\circ}(m),
	\]
	where we used the assumption that $M$ is generated by normalizers in the last equation.
	In particular, $U\subset G$ is an open subset.
	We claim that $M=\overline{C_c(U)}$.
	
	First, we show that $M\subset \overline{C_c(U)}$.
	It suffices to show $m\in \overline{C_c(U)}$ for all normalizers $m$ in $M$.
	Take $\varepsilon>0$.
	Using Lemma \ref{lemma approximate normalizers by functions},
	take $f\in C_0(G^{(0)})$ such that $\lVert fm-m\rVert<\varepsilon$ and $fm\in C_c(\supp^{\circ}(m))$.
	Since $\supp^{\circ}(m)\subset U$,
	it follows that $fm\in C_c(U)$.
	Hence $m\in \overline{C_c(U)}$.
	
	Next, we show that $\overline{C_c(U)}\subset M$.
	It suffices to show $C_c(\supp^{\circ}(m))\subset M$ for each normalizer $m\in M$,
	since one can show
	\[
	C_c(U)=\Span_{m\in M\cap N(C^*_r(G), C_0(G^{(0)}))} C_c(\supp^{\circ}(m))
	\]
	by a partition of unity argument.
	Take $f\in C_c(\supp^{\circ}(m))$.
	Since $\supp^{\circ}(m)$ is a bisection by Proposition \ref{prop open support of normaliser is a bisection},
	$r|_{\supp^{\circ}(m)}$ is a homeomorphism onto the image.
	Define $g\in C_c(G^{(0)})$ by
	\begin{align*}
		g(x)=
		\begin{cases}
			f(r|_{\supp^{\circ}(m)}^{-1}(x))/m(r|_{\supp^{\circ}(m)}^{-1}(x))  & (x\in r(\supp^{\circ}(m))),\\
			0 & (\text{otherwise}).
		\end{cases}
	\end{align*}
	Then one can check that $f=gm$.
	Since $M$ is a left $C_0(G^{(0)})$-submodule,
	$f=gm$ belongs to $M$.
	\qed
\end{proof}

Let $G$ be an \'etale groupoid.
An intermediate C*-subalgebra $C_0(G^{(0)})\subset B\subset C^*_r(G)$ is obviously a $C_0(G^{(0)})$-bimodule.
In addition, for an open subgroupoid $G^{(0)}\subset H\subset G$,
$C^*_r(H)$ naturally becomes a C*-subalgebra of $C^*_r(G)$ (\cite[Lemma 3.2]{BrownExelFuller2021}).
If we apply Theorem \ref{theorem main theorem about sub module} to intermediate C*-subalgebras $C_0(G^{(0)})\subset B\subset C^*_r(G)$,
we obtain the next corollary,
which has already been obtained in \cite[Theorem 3.3]{BrownExelFuller2021}.

\begin{cor}[{\cite[Theorem 3.3]{BrownExelFuller2021}}]\label{corollary Galois corr between subgroupoid and subalgebra}
	Let $G$ be an effective \'etale groupoid.
	Then the map $H\mapsto C^*_r(H)$ is a bijection between the set of open subgroupoids $G^{(0)}\subset H\subset G$ and the set of C*-subalgebras $C_0(G^{(0)})\subset B\subset C^*_r(G)$ which are closed linear spans of normalisers.
\end{cor}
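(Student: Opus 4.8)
The plan is to build both the correspondence and its inverse directly out of Theorem \ref{theorem main theorem about sub module}, using the open support of elements as the dictionary between subalgebras and subgroupoids. First I would check that $H\mapsto C^*_r(H)$ lands in the stated target. For an open wide subgroupoid $H$, the inclusion $C^*_r(H)\subset C^*_r(G)$ is isometric by \cite[Lemma 3.2]{BrownExelFuller2021}, so that $C^*_r(H)=\overline{C_c(H)}$ inside $C^*_r(G)$. It contains $C_0(G^{(0)})$ because $H\supset G^{(0)}$, and it is a closed linear span of normalizers: covering $H$ by open bisections and using a partition of unity writes every element of $C_c(H)$ as a finite sum of functions supported on open bisections, each a normalizer by Proposition \ref{prop open support of normaliser is a bisection}.

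The key bookkeeping fact I would isolate is that $H$ is recovered from $C^*_r(H)$ via $H=\bigcup_{a\in C^*_r(H)}\supp^{\circ}(a)$. The inclusion $\supp^{\circ}(a)\subset H$ holds because, regarding elements as functions via Proposition \ref{prop evaluation}, uniform approximation of $a$ by elements of $C_c(H)$ (which vanish off $H$) forces $a$ to vanish off $H$ as well; the reverse inclusion is Urysohn's lemma. This immediately gives injectivity of $H\mapsto C^*_r(H)$.

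For surjectivity, given an intermediate C*-subalgebra $B$ that is a closed linear span of normalizers, Theorem \ref{theorem main theorem about sub module} yields an open set $U$ with $B=\overline{C_c(U)}$, and the proof of that theorem identifies $U=\bigcup_{a\in B}\supp^{\circ}(a)=:H$. I would then verify that $H$ is a wide open subgroupoid. Wideness ($G^{(0)}\subset H$) follows from $C_0(G^{(0)})\subset B$. Closure under inversion follows from closure of $B$ under the involution together with the identity $a^*(\alpha^{-1})=\overline{a(\alpha)}$. The only genuinely delicate point is closure under multiplication: given $\alpha,\beta\in H$ with $d(\alpha)=r(\beta)$, I would choose open bisections $V\ni\alpha$ and $W\ni\beta$ inside $H$ and pick $a\in C_c(V)$, $b\in C_c(W)$ with $a(\alpha),b(\beta)\neq 0$. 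Since $V,W$ are bisections, the product $\alpha\beta$ has a unique factorization through $V$ and $W$, so no cancellation occurs in the convolution and $(a*b)(\alpha\beta)=a(\alpha)b(\beta)\neq 0$, whence $\alpha\beta\in\supp^{\circ}(a*b)\subset H$ because $a*b\in B$.

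Once $H$ is known to be a wide open subgroupoid, $B=\overline{C_c(H)}=C^*_r(H)$ by \cite[Lemma 3.2]{BrownExelFuller2021}, completing surjectivity and hence the bijection. I expect the multiplication-closure step to be the main obstacle, precisely because the convolution product can a priori produce cancellation in the evaluation $(a*b)(\alpha\beta)$; restricting to normalizers supported on bisections, where factorizations through $V$ and $W$ are unique, is what rescues the argument.
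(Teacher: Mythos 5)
Your argument is correct and follows essentially the same route as the paper: surjectivity comes from Theorem \ref{theorem main theorem about sub module} producing an open set $U$ with $B=\overline{C_c(U)}$, which the algebra structure of $B$ forces to be a wide open subgroupoid. You simply spell out the details the paper leaves as ``one can see'' (injectivity via recovering $H$ from open supports, and closure under multiplication via the no-cancellation convolution computation on bisections), and those details check out.
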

\begin{proof}
	It is sufficient to show that the map $H\mapsto C^*_r(H)$ is surjective.
	Assume that a C*-subalgebra $C_0(G^{(0)})\subset B\subset C^*_r(G)$ is a closed linear span of normalisers.
	Then there exists an open set $H\subset G$ such that $B=\overline{C_c(H)}$ by Theorem \ref{theorem main theorem about sub module}.
	Since we have the inclusion $C_0(G^{(0)})\subset B$ and $B$ is a C*-subalgebra,
	one can see that $H\subset G$ is an open subgroupoid such that $G^{(0)}\subset H$.
	Now we have shown the corollary.
	\qed
\end{proof}

Using Theorem \ref{theorem main theorem about sub module},
we investigate the inverse semigroups of slices.

\begin{prop}\label{prop slice comes from bisections}
	Let $G$ be an \'etale groupoid and $U\subset G$ be an open set.
	Then $\overline{C_c(U)}\subset C^*_r(G)$ belongs to $\mathcal{S}(C^*_r(G), C_0(G^{(0)}))$ if $U$ is a bisection.
	If $G$ is effective,
	then the converse is true.
\end{prop}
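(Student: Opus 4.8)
**

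The plan is to prove both directions of Proposition \ref{prop slice comes from bisections} by directly checking the defining conditions of a slice, leaning on the structural results already established.

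\textbf{Forward direction.} Suppose $U \subset G$ is an open bisection. I would set $M \defeq \overline{C_c(U)}$ and verify the two requirements in the definition of a slice, namely that $M \subset N(C^*_r(G), C_0(G^{(0)}))$ and that $C_0(G^{(0)}) M \cup M C_0(G^{(0)}) \subset M$. The membership $M \subset N(C^*_r(G), C_0(G^{(0)}))$ follows directly from Proposition \ref{prop open support of normaliser is a bisection}: every element of $C_c(U)$ is a normalizer because $U$ is a bisection, and since $N(C^*_r(G), C_0(G^{(0)}))$ is norm closed, the closure $\overline{C_c(U)}$ stays inside it. For the bimodule condition, I would first observe that $C_0(G^{(0)}) = \overline{C_c(G^{(0)})}$ and that for $g \in C_c(G^{(0)})$ and $f \in C_c(U)$ the convolution product $gf$ and $fg$ again lie in $C_c(U)$ (the support is controlled because $G^{(0)}$ acts by pointwise multiplication after composing with $r$ or $d$). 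Passing to closures gives the bimodule inclusion. This direction is essentially a packaging of Proposition \ref{prop open support of normaliser is a bisection} together with elementary support computations.

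\textbf{Converse direction.} Now assume $G$ is effective and that $M \defeq \overline{C_c(U)} \in \mathcal{S}(C^*_r(G), C_0(G^{(0)}))$; I must show $U$ is a bisection. The key leverage is that, by the definition of a slice, $M \subset N(C^*_r(G), C_0(G^{(0)}))$, so every element of $\overline{C_c(U)}$ is a normalizer. For each $\alpha \in U$ I would, using Urysohn's lemma, produce a function $f \in C_c(U)$ with $f(\alpha) \neq 0$, whose open support $\supp^{\circ}(f)$ is contained in $U$. Since $f$ is a normalizer and $G$ is effective, Proposition \ref{prop open support of normaliser is a bisection} tells me that $\supp^{\circ}(f)$ is an open bisection containing $\alpha$. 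This shows that $U$ is covered by open bisections, but being a union of bisections does not by itself make $U$ a bisection; I must upgrade this to injectivity of $d|_U$ and $r|_U$ on all of $U$.

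\textbf{The main obstacle.} The crux is ruling out the possibility that two distinct points $\alpha, \beta \in U$ have $d(\alpha) = d(\beta)$ (or $r(\alpha) = r(\beta)$). I would argue by contradiction: suppose $\alpha \neq \beta$ with $d(\alpha) = d(\beta) = x$. Choosing $f \in C_c(U)$ supported near $\alpha$ and $g \in C_c(U)$ supported near $\beta$, one computes that the product $f^* g$ or $g^* f$ fails to behave as it would for an honest slice — concretely, the normalizer condition or the computation of $j(f^* g)$ via the left regular representation $\lambda_x$ detects both $\alpha$ and $\beta$ in the same fiber $G_x$, contradicting that $\supp^{\circ}$ of a normalizer is a bisection. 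More carefully, if $M$ is a slice then $M^* M$ is again a slice contained in $\mathcal{S}(C^*_r(G), C_0(G^{(0)}))$, hence consists of normalizers whose open supports are bisections and in fact sit inside $G^{(0)}$-type sets; tracking where $f^* g$ is nonzero forces $d(\alpha) \neq d(\beta)$ whenever $\alpha \neq \beta$. I expect this injectivity argument — translating the algebraic slice condition into the geometric bisection condition through the evaluation map $j$ and effectiveness — to be the delicate step, whereas the covering-by-bisections observation is routine.
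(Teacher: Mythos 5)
Your forward direction is fine and matches the paper's (which simply cites Proposition \ref{prop open support of normaliser is a bisection}; the bimodule check you add is routine). The converse direction, however, has a genuine gap at exactly the step you yourself flag as delicate. You correctly observe that covering $U$ by open bisections is not enough, but the mechanism you propose for ruling out $\alpha\neq\beta$ with $d(\alpha)=d(\beta)$ does not work as stated: if $f$ is supported in a small open bisection $V_{\alpha}\ni\alpha$ and $g$ in a small open bisection $V_{\beta}\ni\beta$, then $\supp^{\circ}(f^*g)\subset V_{\alpha}^{-1}V_{\beta}$ and $\supp^{\circ}(fg^*)\subset V_{\alpha}V_{\beta}^{-1}$ are \emph{automatically} bisections (products of bisections are bisections), so the fact that open supports of normalizers are bisections yields no contradiction, whether or not $\alpha=\beta$. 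The real content is your parenthetical claim that elements of $MM^*$ ``sit inside $G^{(0)}$-type sets'', i.e.\ $fg^*\in C_0(G^{(0)})$; from this, $fg^*(\alpha\beta^{-1})=f(\alpha)\overline{g(\beta)}\neq 0$ would force $\alpha\beta^{-1}\in G^{(0)}$ and hence $\alpha=\beta$. That claim is provable --- for a normalizer $n$ one has $nn^*=\lim_{\lambda} nu_{\lambda}n^*\in C_0(G^{(0)})$ using an approximate unit $(u_{\lambda})$ of $C_0(G^{(0)})$ for $C^*_r(G)$, and then polarization applied to the normalizers $f+g$ and $f+ig$ (which \emph{are} normalizers precisely because they lie in the slice $M$) gives $fg^*\in C_0(G^{(0)})$ --- but you give no argument for it, so as written the proof is incomplete.

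The paper avoids all of this with one observation you missed: apply Urysohn's lemma to the compact set $\{\alpha,\beta\}\subset U$ to get a \emph{single} $f\in C_c(U)$ with $f(\alpha)=f(\beta)=1$. Since $M$ is a slice, $f$ is a normalizer, so by effectiveness and Proposition \ref{prop open support of normaliser is a bisection} the set $\supp^{\circ}(f)$ is a bisection containing both $\alpha$ and $\beta$; as $d(\alpha)=d(\beta)$ this gives $\alpha=\beta$ at once, and the same argument with $r$ shows $r|_U$ is injective. The key point, which your product-based detour obscures, is that the slice hypothesis makes \emph{sums} of elements of $C_c(U)$ into normalizers, so a single function seeing both points already does the job.
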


\begin{proof}
	If $U$ is a bisection, it follows that $\overline{C_c(U)}\in \mathcal{S}(C^*_r(G), C_0(G^{(0)}))$ from Proposition \ref{prop open support of normaliser is a bisection}.
	Assume that $G$ is effective, $\overline{C_c(U)}$ is a slice and $\alpha,\beta\in U$ satisfies $d(\alpha)=d(\beta)$.
	Then there exists $f\in C_c(U)$ such that $f(\alpha)=f(\beta)=1$ by Urysohn's lemma.
	Since we assume that $f\in C_c(U)$ is a normalizer,
	$\supp^{\circ}(f)$ is a bisection by Proposition \ref{prop open support of normaliser is a bisection}.
	Hence we have $\alpha=\beta$ and the domain map is injective on $U$.
	Similarly, one can show that the range map is injective on $U$.
	Therefore $U$ is a bisection.
	\qed
\end{proof}

\begin{cor}\label{corollary inverse semigroup of bisections are inverse semigroup of slices}
	Let $G$ be an effective \'etale groupoid.
	Then the map 
	\[\Psi\colon \mathrm{Bis}(G)\ni U\mapsto \overline{C_c(U)}\in \mathcal{S}(C^*_r(G), C_0(G^{(0)})) \]
	in Proposition \ref{prop homomorphism from Bis to slices} is an isomorphism.
\end{cor}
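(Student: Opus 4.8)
The plan is to reduce the claim to a surjectivity statement. By Proposition \ref{prop homomorphism from Bis to slices} the map $\Psi$ is already an injective semigroup homomorphism, and a bijective homomorphism between inverse semigroups is automatically an isomorphism (its set-theoretic inverse again preserves products, and preservation of generalized inverses is automatic, exactly as noted in the footnote to that proposition). So it suffices to prove that $\Psi$ is surjective.

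To establish surjectivity I would start from an arbitrary slice $M \in \mathcal{S}(C^*_r(G), C_0(G^{(0)}))$ and check that it satisfies the hypotheses of Theorem \ref{theorem main theorem about sub module}. By the definition of a slice, $M$ is a norm-closed subspace of $C^*_r(G)$ with $C_0(G^{(0)})M \cup M C_0(G^{(0)}) \subset M$ and $M \subset N(C^*_r(G), C_0(G^{(0)}))$. In particular $M$ is a closed left $C_0(G^{(0)})$-submodule, and since \emph{every} element of $M$ is a normalizer, $M$ is trivially a closed linear span of normalizers. This is precisely the situation governed by Theorem \ref{theorem main theorem about sub module}, which then produces an open set $U \subset G$ with $M = \overline{C_c(U)}$.

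It remains to upgrade $U$ to a bisection, and this is where effectiveness enters. Since $G$ is effective and $\overline{C_c(U)} = M$ is a slice, Proposition \ref{prop slice comes from bisections} forces $U$ to be an open bisection, i.e.\ $U \in \mathrm{Bis}(G)$. Consequently $\Psi(U) = \overline{C_c(U)} = M$, so $\Psi$ is surjective, and combined with the injectivity and homomorphism property from Proposition \ref{prop homomorphism from Bis to slices} we conclude that $\Psi$ is an isomorphism.

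I do not expect a genuine obstacle at this stage, as the substantive analytic work has already been carried out in Theorem \ref{theorem main theorem about sub module} (the module-to-open-set correspondence) and in Proposition \ref{prop slice comes from bisections} (the bisection criterion). The only points requiring care are the two routine verifications that a slice meets the theorem's hypotheses, both immediate from the definition, and the observation that effectiveness is invoked exactly once, through Proposition \ref{prop slice comes from bisections}, to guarantee that the open set returned by the theorem is actually a bisection rather than a more general open set.
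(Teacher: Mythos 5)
Your proposal is correct and follows exactly the paper's argument: reduce to surjectivity via Proposition \ref{prop homomorphism from Bis to slices}, observe that a slice is a closed left $C_0(G^{(0)})$-submodule spanned by normalizers so that Theorem \ref{theorem main theorem about sub module} yields $M=\overline{C_c(U)}$ for some open $U$, and then invoke Proposition \ref{prop slice comes from bisections} to conclude $U$ is a bisection. The paper's proof is just a one-line version of the same reasoning, so there is nothing to add.
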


\begin{proof}
	By Proposition \ref{prop homomorphism from Bis to slices}, it is sufficient to show that $\Psi$ is surjective.
	Theorem \ref{theorem main theorem about sub module} and Proposition \ref{prop slice comes from bisections} imply that $\Psi$ is surjective.
	\qed
\end{proof}

We have the following sufficient condition where $C_0(G^{(0)})$-submodule is a closed linear span of normalizers.

\begin{prop}[cf. {\cite[Theorem 3.5(1)]{BrownExelFuller2021}}] \label{prop if coefficient map exists, then bimodule is gen by normalizers}
	Let $G$ be an effective \'etale groupoid and $M\subset C_r^*(G)$ be a closed $C_0(G^{(0)})$-sub-bimodule.
	Assume that there exists a continuous $C_0(G^{(0)})$-bimodule map $\Phi\colon C^*_r(G)\to M$ such that $\Phi|_M=\id$.
	Then $M$ is a closed linear span of normalisers.
\end{prop}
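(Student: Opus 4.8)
The plan is to show that $\Phi$ carries normalisers into normalisers; once this is known the conclusion is immediate. Indeed, $\Phi|_M=\id$ forces $\Phi(C^*_r(G))=M$, and since $\Phi$ is continuous with $C_c(G)$ dense in $C^*_r(G)$ we get $M=\overline{\Phi(C_c(G))}$. A partition of unity subordinate to a cover by open bisections, together with Proposition \ref{prop open support of normaliser is a bisection}, exhibits every element of $C_c(G)$ as a finite linear combination of normalisers, so $\Phi(C_c(G))\subseteq\Span\, N(C^*_r(G),C_0(G^{(0)}))$ as soon as $\Phi$ preserves normalisers. As each $\Phi(n)$ lies in $M$, this gives $M=\overline{\Span\{\Phi(n)\}}$, a closed linear span of normalisers.

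To see that $\Phi$ preserves normalisers I would route through intertwiners. Writing $D\defeq C_0(G^{(0)})$, suppose $m$ is an intertwiner, i.e.\ $mD=Dm$. For $g\in D$ pick $g'\in D$ with $mg=g'm$; the bimodule property gives $\Phi(m)g=\Phi(mg)=\Phi(g'm)=g'\Phi(m)$, so $\Phi(m)D\subseteq D\Phi(m)$, and the symmetric computation starting from $gm\in Dm=mD$ yields the reverse inclusion. Hence $\Phi(m)$ is again an intertwiner. Proposition \ref{prop approximation of normalisers by intertwiners} then does the rest: it identifies $N(C^*_r(G),D)$ with the closure of the set $\mathcal I$ of intertwiners, so $\Phi(\mathcal I)\subseteq\mathcal I\subseteq N(C^*_r(G),D)$, and by continuity $\Phi\bigl(\overline{\mathcal I}\bigr)\subseteq\overline{\mathcal I}=N(C^*_r(G),D)$; thus $\Phi$ maps normalisers to normalisers.

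The main obstacle is that Proposition \ref{prop approximation of normalisers by intertwiners} applies only when $D=C_0(G^{(0)})$ is a \emph{maximal abelian} subalgebra of $C^*_r(G)$, and this is exactly where effectiveness must enter. I would prove it directly using the evaluation of Proposition \ref{prop evaluation}: if $a$ commutes with every $g\in D$, then $g(r(\beta))\,a(\beta)=g(d(\beta))\,a(\beta)$ for all $\beta\in G$, so $\supp^{\circ}(a)\subseteq\Iso(G)$; being open, $\supp^{\circ}(a)\subseteq\Int\Iso(G)=G^{(0)}$ by effectiveness, whence $a\in C_0(G^{(0)})$. The last implication rests on an auxiliary fact worth isolating: any $a\in C^*_r(G)$ with $\supp^{\circ}(a)$ contained in an open bisection $W$ already lies in $\overline{C_c(W)}$. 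I would establish this by observing that the superlevel sets of $j(a)$ are compact and, being contained in $\supp^{\circ}(a)\subseteq W$, compact in $W$, so $j(a)|_W\in C_0(W)$; since the reduced and supremum norms agree on $\overline{C_c(W)}$, the element of $\overline{C_c(W)}$ with defining function $j(a)|_W$ has the same image under $j$ as $a$, and injectivity of $j$ forces it to equal $a$. Assembling these ingredients — the bisection support lemma, the resulting maximal abelianness of $C_0(G^{(0)})$, the intertwiner computation, and the density reduction — completes the proof.
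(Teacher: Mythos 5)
Your proof is correct and follows essentially the same route as the paper: the key step in both is the computation showing that the $C_0(G^{(0)})$-bimodule map $\Phi$ sends intertwiners to intertwiners, followed by Proposition \ref{prop approximation of normalisers by intertwiners} and continuity to conclude that $\Phi$ preserves normalisers, and then a density/partition-of-unity argument to identify $M$ with the closed span of $\Phi(N(C^*_r(G),C_0(G^{(0)})))$. The only difference is that you prove maximal abelianness of $C_0(G^{(0)})$ (and the auxiliary bisection-support lemma) directly, where the paper simply cites \cite[II Proposition 4.7]{renault1980groupoid}; both versions are sound.
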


\begin{proof}
	This proof is a simple modification of the one in \cite[Theorem 3.5(1)]{BrownExelFuller2021}.
	Let $n\in C^*_r(G)$ be an intertwiner for $C_0(G^{(0)})$.
	Then $\Phi(n)$ is also an intertwiner for $C_0(G^{(0)})$.
	Indeed, for any $d\in C_0(G^{(0)})$,
	there exists $d'\in C_0(G^{(0)})$ such that $nd=d'n$ since $n$ is an intertwiner.
	Now we have
	\[
	\Phi(n)d=\Phi(nd)=\Phi(d'n)=d'\Phi(n).
	\]
	Therefore $\Phi(n)C_0(G^{(0)})\subset C_0(G^{(0)})\Phi(n)$ holds.
	The same argument implies $\Phi(n)C_0(G^{(0)})\supset C_0(G^{(0)})\Phi(n)$ and we obtain $\Phi(n)C_0(G^{(0)})= C_0(G^{(0)})\Phi(n)$.
	Hence, $\Phi(n)$ is an intertwiner for $C_0(G^{(0)})$.
	Using the continuity of $\Phi$ and Proposition \ref{prop approximation of normalisers by intertwiners}\footnote{Since we assume that $G$ is effective, $C_0(G^{(0)})\subset C^*_r(G)$ is a maximal abelian subalgebra. For example, see \cite[II Proposition 4.7]{renault1980groupoid}.},
	we obtain $\Phi(N(C^*_r(G), C_0(G^{(0)})))\subset N(C^*_r(G),C_0(G^{(0)}))$.
	Now, one can check that $\overline{\Span}\Phi(N(C^*_r(G), C_0(G^{(0)})))=M$.
	Therefore, $M$ is a closed linear span of normalizers.
	\qed
\end{proof}

In the above situation,
the corresponding open set is closed as follows.

\begin{prop}[cf.\ {\cite[Lemma 3.4]{BrownExelFuller2021}}]\label{prop if there exists coeffieicent map, then U is closed}
	Let $G$ be an \'etale groupoid and $U\subset G$ be an open set.
	Assume that there exists a continuous $C_0(G^{(0)})$-module map $\Phi\colon C^*_r(G)\to \overline{C_c(U)}$ such that $\Phi|_{\overline{C_c(U)}}=\id$.
	Then $U$ is closed.
\end{prop}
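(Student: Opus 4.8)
The plan is to argue by contradiction: assuming $U$ is not closed, I fix a point $\gamma_0\in\overline{U}\setminus U$ and produce a single element whose image under $\Phi$, read as a function on $G$ through the embedding $j$ of Proposition~\ref{prop evaluation}, is forced to take two incompatible values at $\gamma_0$. The preliminary observation I would record is that every element of $\overline{C_c(U)}$ vanishes off $U$ as a function: if $b=\lim_n b_n$ with $b_n\in C_c(U)$, then since $j$ is norm decreasing we have $b_n\to b$ uniformly, while $b_n(\gamma)=0$ for each $\gamma\notin U$ (as $\supp(b_n)$ is a compact subset of $U$); hence $b(\gamma)=0$ for all $\gamma\notin U$. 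In particular $\Phi(a)(\gamma_0)=0$ for \emph{every} $a\in C^*_r(G)$, because $\Phi(a)\in\overline{C_c(U)}$ and $\gamma_0\notin U$.

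Next I would fix an open bisection $V\ni\gamma_0$ (these form a basis, as $G$ is \'etale) and, by Urysohn's lemma, choose $a\in C_c(V)$ with $a(\gamma_0)=1$. The heart of the argument is the reproduction identity $\Phi(a)(\gamma)=a(\gamma)$ for every $\gamma\in U\cap V$. To prove it, fix such a $\gamma$ and, using that $r$ is an open map, choose $h\in C_c(r(U\cap V))$ with $h(r(\gamma))=1$. Since $V$ is a bisection, the left module product $ha$, given by $(ha)(\alpha)=h(r(\alpha))a(\alpha)$, is supported inside $U\cap V$, so $ha\in C_c(U)$ and therefore $\Phi(ha)=ha$. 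On the other hand $\Phi$ is a $C_0(G^{(0)})$-module map, so $\Phi(ha)=h\,\Phi(a)$; evaluating the equality $h\,\Phi(a)=ha$ at $\gamma$ and using $h(r(\gamma))=1$ yields $\Phi(a)(\gamma)=a(\gamma)$.

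Finally, since $\gamma_0\in\overline{U}$ and $V$ is an open neighbourhood of $\gamma_0$, I can choose a net $(\gamma_\lambda)$ in $U\cap V$ converging to $\gamma_0$. Applying the reproduction identity along this net and passing to the limit, the continuity of $a$ and of $\Phi(a)$ (both are elements of $C_0(G)$ via $j$) gives $\Phi(a)(\gamma_0)=a(\gamma_0)=1$, contradicting $\Phi(a)(\gamma_0)=0$ from the first step. Hence no such $\gamma_0$ exists, so $\overline{U}=U$ and $U$ is closed. I expect the reproduction identity to be the only delicate point, and within it the support computation showing $ha\in C_c(U)$—this is exactly where the module structure of $\Phi$ is exploited to trim $a$ into a function supported inside $U$; the remaining ingredients are continuity of $j(\Phi(a))$ and the elementary vanishing property of $\overline{C_c(U)}$.
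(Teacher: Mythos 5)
Your argument is correct and follows essentially the same route as the paper's proof: contradiction at a point $\gamma_0\in\overline{U}\setminus U$, vanishing of $j(\overline{C_c(U)})$ off $U$, a Urysohn function on an open bisection through $\gamma_0$, and left multiplication by a function supported in $r(U\cap V)$ to push it into $C_c(U)$ so that the module property and $\Phi|_{\overline{C_c(U)}}=\id$ force the value $1$ in the limit along a net in $U\cap V$. The only difference is organizational: you isolate the pointwise reproduction identity $\Phi(a)(\gamma)=a(\gamma)$ on $U\cap V$ before taking the limit, whereas the paper interleaves the choice of the cutoff functions $g_n$ with the net computation.
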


\begin{proof}
	This proof is essentially same as the one in \cite[Lemma 3.4]{BrownExelFuller2021}.
	Assume that $U$ is not closed and take $\gamma\in \overline{U}\setminus U$.
	Put $M\defeq \overline{C_c(U)}$ and denote the evaluation at $\gamma$ by $\ev_{\gamma}$.
	Namely, $\ev_{\gamma}\in C^*_r(G)^*$ is the map defined by $\ev_{\gamma}(a)=a(\gamma)$ for $a\in C^*_r(G)$ (see Proposition \ref{prop evaluation}).
	Then $\ev_{\gamma}$ is zero on $M$.
	
	Take an open bisection $V\subset G$ with $\gamma\in V$.
	By Urysohn's lemma,
	there exists $f\in C_c(V)$ such that $f(\gamma)=1$.
	Then $\Phi(f)(\gamma)=\ev_{\gamma}(\Phi(f))=0$.
	
	On the other hand, there exists a net $\{\gamma_{n}\}\subset U\cap V$ that converges to $\gamma$.
	For each $\gamma_{n}$,
	take an open set $V_{n}$ such that $\gamma_n\in V_n \subset U\cap V$.
	By Urysohn's lemma,
	take $g_n\in C_c(r(V_n))$ such that $g_n(r(\gamma_n))=1$ for each $n$.
	Then we have $g_n*f\in C_c(V_n) \subset C_c(U)$.
	Now it follows that
	\begin{align*}
	\Phi(f)(\gamma)&=\lim_n \Phi(f)(\gamma_n) =\lim_n g_n*\Phi(f)(\gamma_n) \\
	&=\lim_n \Phi(g_n*f)(\gamma_n)=\lim_ng_n*f(\gamma_n)=1,
	\end{align*}
	which contradicts to $\Phi(f)(\gamma)=0$.
	\qed
\end{proof}

As an application,
we show that discrete group coactions of groupoid C*-algebras come from cocycles if the fixed point algebras contain $C_0(G^{(0)})$.

\begin{cor}\label{corollary coaction of discrete group on groupoid C*-algebras}
	Let $G$ be an effective \'etale groupoid, $\Gamma$ be a discrete group and $\delta\colon C^*_r(G)\to C^*_r(G)\otimes C^*_r(\Gamma)$ be a coaction.
	Assume that the fixed point algebra $C^*_r(G)_e$ contains $C_0(G^{(0)})$.
	Then there exists a continuous cocycle $c\colon G\to \Gamma$ such that $\delta=\delta_c$.
\end{cor}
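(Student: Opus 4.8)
The plan is to push each spectral subspace of the coaction through the machinery developed above and then show that the resulting open sets assemble into a clopen partition of $G$ indexed by $\Gamma$, from which the cocycle $c$ is simply read off. Throughout write $A=C^*_r(G)$ and let $A_s$ denote its spectral subspaces. Recall from Subsection \ref{subsection Discrete group coactions on C*-algebras} that each $A_s$ is a closed $A_e$-sub-bimodule and that $\Phi_s\colon A\to A_s$ is a norm-decreasing $A_e$-bimodule map with $\Phi_s|_{A_s}=\id$. Since $C_0(G^{(0)})\subset A_e$ by hypothesis, $A_s$ is a closed $C_0(G^{(0)})$-sub-bimodule and $\Phi_s$ is a continuous $C_0(G^{(0)})$-bimodule map restricting to the identity on $A_s$. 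Hence Proposition \ref{prop if coefficient map exists, then bimodule is gen by normalizers} applies with $M=A_s$ and $\Phi=\Phi_s$, so $A_s$ is a closed linear span of normalizers; Theorem \ref{theorem main theorem about sub module} then produces an open set $U_s\subset G$ with $A_s=\overline{C_c(U_s)}$, and Proposition \ref{prop if there exists coeffieicent map, then U is closed} shows that $U_s$ is in fact closed. Thus each $U_s$ is clopen, and (as in the proof of Theorem \ref{theorem main theorem about sub module}) it is the union of the open supports of the normalizers lying in $A_s$.

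Next I would verify that $\{U_s\}_{s\in\Gamma}$ is a partition of $G$. For the covering, put $U\defeq\bigcup_s U_s$; since $A_s=\overline{C_c(U_s)}\subset\overline{C_c(U)}$ for every $s$ and $A=\overline{\Span}\bigcup_s A_s$, we get $\overline{C_c(U)}=A$. Were there $\gamma\in G\setminus U$, every element of $C_c(U)$, hence of $\overline{C_c(U)}$, would vanish at $\gamma$ by continuity of $\ev_\gamma$, contradicting the existence of $f\in C_c(G)$ with $f(\gamma)\neq0$; therefore $U=G$. For the pairwise disjointness, the essential reduction is to show that $G^{(0)}\cap U_r=\emptyset$ for every $r\neq e$. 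Granting this, if $\alpha\in U_s\cap U_t$ with $s\neq t$ I choose normalizers $m\in A_s$ and $n\in A_t$ with $\alpha\in\supp^{\circ}(m)\cap\supp^{\circ}(n)$; since these open supports are bisections (Proposition \ref{prop open support of normaliser is a bisection}), a direct computation of the convolution at the unit $d(\alpha)$ gives $(m^*n)(d(\alpha))=\overline{m(\alpha)}\,n(\alpha)\neq0$. As $m^*n\in A_{s^{-1}t}$ with $s^{-1}t\neq e$, this places a unit in $G^{(0)}\cap U_{s^{-1}t}$, a contradiction.

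I expect the reduction "$G^{(0)}\cap U_r=\emptyset$ for $r\neq e$'' to be the main obstacle. The plan is to compare the canonical conditional expectation $E\colon A\to C_0(G^{(0)})$, given by restriction of functions to the unit space (Proposition \ref{prop evaluation}), with $\Phi_e$. The composite $E\circ\Phi_e$ is a positive contraction with range in $C_0(G^{(0)})$ that fixes $C_0(G^{(0)})$ pointwise, hence a conditional expectation onto $C_0(G^{(0)})$. Because $G$ is effective, $C_0(G^{(0)})$ is a maximal abelian subalgebra onto which the conditional expectation is unique (cf.\ \cite{renault}), so $E\circ\Phi_e=E$. For $a\in A_r$ with $r\neq e$ we have $\Phi_e(a)=0$, whence $E(a)=E(\Phi_e(a))=0$; that is, every element of $A_r$ vanishes on $G^{(0)}$, and since $A_r=\overline{C_c(U_r)}$ this forces $U_r\cap G^{(0)}=\emptyset$. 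This uniqueness of the conditional expectation is the delicate input, and I would take care to cite it precisely for effective groupoids.

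Finally, with $\{U_s\}$ a clopen partition of $G$, the map $c\colon G\to\Gamma$ defined by $c(\alpha)=s\iff\alpha\in U_s$ is locally constant, hence a continuous map into the discrete group $\Gamma$. It is a cocycle: $G^{(0)}\subset U_e$ follows from $C_0(G^{(0)})\subset A_e$, while $U_s^{-1}=U_{s^{-1}}$ and $U_sU_t\subset U_{st}$ follow from $A_s^*=A_{s^{-1}}$ and $A_sA_t\subset A_{st}$ together with the bisection computations for open supports of normalizers (for composable $\alpha\in U_s$, $\beta\in U_t$ one checks $(mn)(\alpha\beta)=m(\alpha)n(\beta)\neq0$ for suitable normalizers $m,n$, so $\alpha\beta\in U_{st}$). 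To conclude $\delta=\delta_c$, observe that each $f\in C_c(G)$ has compact support meeting only finitely many of the disjoint open sets $U_s$, say for $s\in F$, so $f=\sum_{s\in F}f\,\mathbf{1}_{U_s}$ with $f\,\mathbf{1}_{U_s}\in C_c(U_s)\subset A_s$; hence $\delta(f)=\sum_{s\in F}(f\,\mathbf{1}_{U_s})\otimes\lambda_s=\delta_c(f)$. Since $C_c(G)$ is dense in $A$ and both maps are continuous, $\delta=\delta_c$.
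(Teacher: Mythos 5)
Your proposal follows the same skeleton as the paper's proof: apply Proposition \ref{prop if coefficient map exists, then bimodule is gen by normalizers} to each spectral subspace $C^*_r(G)_s$ using the map $\Phi_s$, then Theorem \ref{theorem main theorem about sub module} and Proposition \ref{prop if there exists coeffieicent map, then U is closed} to obtain clopen sets $U_s$ with $C^*_r(G)_s=\overline{C_c(U_s)}$, show that $\{U_s\}_{s\in\Gamma}$ partitions $G$, read off the cocycle $c$, and verify $\delta=\delta_c$ on $C_c(G)$. The one genuine divergence is the pairwise disjointness of the $U_s$. The paper gets this in one line from $C^*_r(G)_s\cap C^*_r(G)_t=\{0\}$: if $U_s\cap U_t$ were nonempty it would be a nonempty open set, and any nonzero $f\in C_c(U_s\cap U_t)$ would lie in $\overline{C_c(U_s)}\cap\overline{C_c(U_t)}=\{0\}$, a contradiction. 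Your route --- reducing to $U_r\cap G^{(0)}=\emptyset$ for $r\neq e$ via a convolution computation on bisections, and then proving that emptiness via uniqueness of the conditional expectation onto the masa $C_0(G^{(0)})$ --- does work, but it imports a nontrivial input (uniqueness of the conditional expectation for effective, not necessarily second countable or topologically principal, groupoids) that itself requires a density argument inside each bisection; you rightly flag it as the delicate step, and it is the weakest link in your write-up. It is also entirely avoidable even within your own reduction: since $G$ is \'etale, $G^{(0)}$ is open in $G$, so $U_r\cap G^{(0)}$ is open, and a nonzero $f\in C_c(U_r\cap G^{(0)})$ would lie in $C^*_r(G)_r\cap C_0(G^{(0)})\subset C^*_r(G)_r\cap C^*_r(G)_e=\{0\}$. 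On the other hand, the detail you supply on the covering $G=\bigcup_s U_s$ and on the cocycle identities $U_s^{-1}=U_{s^{-1}}$ and $U_sU_t\subset U_{st}$ is correct and usefully fills in steps the paper leaves implicit.
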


\begin{proof}
	See Subsection \ref{subsection Discrete group coactions on C*-algebras} for notations of discrete group coactions.
	For $s\in \Gamma$,
	let $C^*_r(G)_s$ denote the spectral module for $s\in\Gamma$.
	Then $C^*_r(G)_s$ is a $C^*_r(G)_e$-sub-bimodule.
	Since we assume that $C^*_r(G)_e$ contains $C_0(G^{(0)})$,
	$C^*_r(G)_s$ is a $C_0(G^{(0)})$-sub-bimodule.
	Moreover,
	$\Phi_s\colon C^*_r(G)\to C^*_r(G)_s$ is a $C_0(G^{(0)})$-bimodule map.
	By Proposition \ref{prop if coefficient map exists, then bimodule is gen by normalizers},
	$C^*_r(G)_s$ is a closed linear span of normalizers.
	Hence,
	by Theorem \ref{theorem main theorem about sub module} and Proposition \ref{prop if there exists coeffieicent map, then U is closed},
	there exists an open and closed set $U_s\subset G$ such that $C^*_r(G)_s=\overline{C_c(U_s)}$.
	Since $C^*_r(G)_s\cap C^*_r(G)_t=\{0\}$ holds for $s,t\in \Gamma$ with $s\not=t$,
	a family $\{U_s\}_{s\in\Gamma}$ is pairwise disjoint.
	In addition, since $C^*_r(G)=\overline{\Span}\bigcup_{s\in\Gamma}C^*_r(G)_s$,
	we have $G=\bigcup_{s\in\Gamma}U_s$.
	Therefore,
	$\{U_s\}_{s\in\Gamma}$ is a partition of $G$.
	Accordingly, we may define $c\colon G\to \Gamma$ so that $U_s=c^{-1}(\{s\})$ holds for all $s\in\Gamma$.
	Then $c\colon G\to \Gamma$ is continuous since each $U_s$ is open.
	One can check that $c\colon G\to \Gamma$ is cocycle.
	Hence, we obtain the continuous cocycle $c\colon G\to\Gamma$.
	
	Now, it remains to show that $\delta=\delta_c$.
	It is clear that $\delta(a)=\delta_c(a)$ holds for all $a\in \Span \bigcup_{s\in\Gamma}C^*_r(G)_s$ by the definition of $\delta_c$ and the equation $C^*_r(G)_s=\overline{C_c(U_s)}$.
	Since $\Span \bigcup_{s\in\Gamma}C^*_r(G)_s$ is dense in $C^*_r(G)$,
	we obtain $\delta=\delta_c$
	\qed
\end{proof}

\subsection{Galois correspondence for discrete group coactions on groupoid C*-algebras}\label{subsection Galois correspondence for discrete group coactions on groupoid C*-algebras }

In this subsection,
we investigate open subgroupoids which contain the kernels of cocycles.
Precisely, we prove the next theorem in this subsection.

\begin{thm}\label{theorem intermediate subgroupoids and subgroups}
	Let $G$ be an \'etale groupoid, $\Gamma$ be a discrete group and $c\colon G\to \Gamma$ be a continuous cocycle.
	Assume that $\ker c\subset G$ is minimal and $c\colon G\to \Gamma$ is surjective.
	Set 
	\[\mathcal{D}\defeq\{H\subset G\mid \ker c\subset H,  \text{$H$ is an open subgroupoid.}\} \]
	and
	\[\mathcal{E}\defeq\{\Lambda\subset\Gamma\mid \text{$\Lambda$ is a subgroup.}\}.\]
	Then the map $\mathcal{D}\ni H\mapsto c(H)\in\mathcal{E}$ is bijective and the inverse map is given by $\Lambda\mapsto c^{-1}(\Lambda)$.
	In particular,
	an open subgroupoid $H\in\mathcal{D}$ is automatically closed.
\end{thm}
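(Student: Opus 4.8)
The plan is to show the two maps are well defined and mutually inverse, with the minimality of $\ker c$ entering as the crucial ingredient. The easy direction comes first: for a subgroup $\Lambda\subseteq\Gamma$, the set $c^{-1}(\Lambda)$ is open because $c$ is continuous and $\Lambda$ is open in the discrete group $\Gamma$; it is a subgroupoid because $c$ is a homomorphism and $\Lambda$ is closed under products and inverses; and it contains $\ker c=c^{-1}(\{e\})$ since $e\in\Lambda$ (recall that a cocycle sends units to $e$, so $G^{(0)}\subseteq\ker c$). Thus $\Lambda\mapsto c^{-1}(\Lambda)$ really maps $\mathcal{E}$ into $\mathcal{D}$, and the identity $c(c^{-1}(\Lambda))=\Lambda$ is then immediate: the inclusion $\subseteq$ is formal, while $\supseteq$ is exactly where surjectivity of $c$ is used, since every $\gamma\in\Lambda\subseteq\Gamma$ is hit by some $\alpha\in G$, which then lies in $c^{-1}(\Lambda)$.

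The heart of the argument is a saturation lemma extracted from minimality. Fix $H\in\mathcal{D}$ and $g\in c(H)$, and set $U\defeq d(H\cap c^{-1}(\{g\}))$. Since $H$ is open and $c^{-1}(\{g\})$ is clopen, $H\cap c^{-1}(\{g\})$ is open, and as $d$ is a local homeomorphism (hence an open map) $U$ is a nonempty open subset of $G^{(0)}$. I claim $U$ is invariant under $\ker c$: if $x=d(h)$ with $h\in H$, $c(h)=g$, and $k\in\ker c$ has $d(k)=x$, then $hk^{-1}\in H$, $c(hk^{-1})=g$, and $d(hk^{-1})=r(k)$, so $r(k)\in U$; passing to $k^{-1}$ shows $U$ is saturated. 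A nonempty open $\ker c$-invariant subset of $G^{(0)}$ must equal $G^{(0)}$ by minimality (density of every $\ker c$-orbit forces any such $U$ to meet, hence contain, every orbit). Therefore $U=G^{(0)}$; in words, whenever $g\in c(H)$, for every $x\in G^{(0)}$ there is $h\in H$ with $c(h)=g$ and $d(h)=x$, and the analogous statement for $r$ follows by applying this to $g^{-1}\in c(H)$ and taking inverses.

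With this lemma the remaining points fall into place. That $c(H)$ is a subgroup (so $H\mapsto c(H)$ maps $\mathcal{D}$ into $\mathcal{E}$) reduces to closure under products, since inverses and the identity are clear: given $g,g'\in c(H)$, pick $h\in H$ with $c(h)=g$, then use the lemma to find $h'\in H$ with $c(h')=g'$ and $r(h')=d(h)$, so $hh'\in H$ is defined and $c(hh')=gg'$. For the key equality $c^{-1}(c(H))=H$, the inclusion $H\subseteq c^{-1}(c(H))$ is trivial, so I focus on the reverse: given $\alpha$ with $c(\alpha)=g\in c(H)$, apply the lemma at $x=d(\alpha)$ to obtain $h\in H$ with $c(h)=g$ and $d(h)=d(\alpha)$; then $h\alpha^{-1}\in\ker c\subseteq H$, whence $\alpha=(h\alpha^{-1})^{-1}h\in H$. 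This proves $c^{-1}(c(H))=H$, which simultaneously exhibits $H$ as the preimage of the clopen set $c(H)$ and so shows $H$ is closed. Combining $c(c^{-1}(\Lambda))=\Lambda$ with $c^{-1}(c(H))=H$ yields the asserted bijection. The main obstacle is the saturation lemma: everything hinges on recognizing $U$ as an open invariant set and invoking minimality, together with the bookkeeping of domains and ranges needed to realize abstract group products as genuine groupoid products inside $H$.
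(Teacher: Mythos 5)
Your proof is correct, and its skeleton matches the paper's: both arguments reduce the theorem to the three facts that $c(H)$ is a subgroup, that $c(c^{-1}(\Lambda))=\Lambda$ (where surjectivity enters), and that $c^{-1}(c(H))=H$ (where minimality enters), with closedness of $H$ falling out of the last identity because subsets of a discrete group are clopen. Where you diverge is in how the minimality of $\ker c$ is deployed. The paper works pointwise: for the subgroup property it uses only topological transitivity of $\ker c$, finding a single $\gamma\in\ker c$ joining the two nonempty open sets $d(c^{-1}(\{s\})\cap H)$ and $r(c^{-1}(\{t\})\cap H)$; and for $c^{-1}(c(H))=H$ it uses density of one orbit to produce a bridging element $\beta\in\ker c$, leading to the factorization $\gamma=\beta^{-1}\alpha'(\alpha'^{-1}\beta\gamma)$. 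You instead prove a global saturation lemma --- $d(H\cap c^{-1}(\{g\}))$ is a nonempty open $\ker c$-invariant subset of $G^{(0)}$, hence equals $G^{(0)}$ --- which is a strictly stronger intermediate statement than what the paper extracts, and it makes the subsequent algebra cleaner: you get $h\in H$ with $c(h)=g$ and $d(h)=d(\alpha)$ exactly, so $\alpha=(h\alpha^{-1})^{-1}h$ with no bridging element needed. The trade-off is that your lemma invokes full minimality even for the subgroup property, whereas the paper's first lemma records that this step needs only topological transitivity (a distinction the paper exploits in its later counterexample with $\mathcal{O}_\infty$); in exchange, your version packages the dynamical input once and reuses it uniformly for both nontrivial steps.
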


We divide a proof of Theorem \ref{theorem intermediate subgroupoids and subgroups} into a few lemmas.
First, we show that the map in Theorem \ref{theorem intermediate subgroupoids and subgroups} is well-defined.

\begin{lem}
	Let $G$ be an \'etale groupoid, $\Gamma$ be a discrete group and $c\colon G\to \Gamma$ be a continuous cocycle.
	Assume that $\ker c$ is topologically transitive.
	For $H\in\mathcal{D}$, $c(H)\subset \Gamma$ is a subgroup.
\end{lem}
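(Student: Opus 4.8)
The plan is to verify the three subgroup axioms for $c(H)\subset\Gamma$, the only nontrivial one being closure under multiplication. Since $\ker c$ is wide (units map to the identity $e\in\Gamma$ because $c$ is a groupoid homomorphism), the hypothesis $\ker c\subset H$ forces $G^{(0)}\subset H$, so $e=c(x)\in c(H)$ for any unit $x$. Closure under inverses is immediate: if $s=c(\alpha)$ with $\alpha\in H$, then $\alpha^{-1}\in H$ because $H$ is a subgroupoid, and $c(\alpha^{-1})=c(\alpha)^{-1}=s^{-1}\in c(H)$.

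For closure under products, I would take $s,t\in c(H)$ and choose $\alpha,\beta\in H$ with $c(\alpha)=s$ and $c(\beta)=t$. The obstacle is that $\alpha$ and $\beta$ need not be composable, so I cannot simply multiply them; this is exactly where topological transitivity of $\ker c$ enters, to bridge the gap by an element of $\ker c$ without changing $c$-values. Since $G$ is \'etale and $H$ is open, I would first pick open bisections $U,V\subset H$ with $\alpha\in U$ and $\beta\in V$; as $\Gamma$ is discrete and $c$ is continuous, $c$ is locally constant, so after shrinking I may assume $c\equiv s$ on $U$ and $c\equiv t$ on $V$. Both $d(U)$ and $r(V)$ are then nonempty open subsets of $G^{(0)}$, since $d$ and $r$ are open maps.

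The key step is to apply topological transitivity of $\ker c$ to the nonempty open set $r(V)$: the set $r\big((\ker c)_{r(V)}\big)$ is dense in $G^{(0)}$, hence meets the open set $d(U)$. This yields $\gamma\in\ker c$ with $d(\gamma)\in r(V)$ and $r(\gamma)\in d(U)$. Using that $U$ and $V$ are bisections, there are unique $\alpha'\in U$ with $d(\alpha')=r(\gamma)$ and $\beta'\in V$ with $r(\beta')=d(\gamma)$, so the product $\alpha'\gamma\beta'$ is defined. It lies in $H$ because $U,V\subset H$, $\gamma\in\ker c\subset H$, and $H$ is closed under multiplication; and since $c$ is a homomorphism,
\[
c(\alpha'\gamma\beta')=c(\alpha')\,c(\gamma)\,c(\beta')=s\cdot e\cdot t=st,
\]
so $st\in c(H)$, completing the verification. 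I expect the bridging step to be the main point requiring care: one must check that the directions of the domain and range conditions imposed on $\gamma$ match the definition of topological transitivity (density of $r(d^{-1}(\,\cdot\,))$), and that the bisection property is what supplies the composable representatives $\alpha'\in U$ and $\beta'\in V$.
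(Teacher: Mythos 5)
Your proof is correct and follows essentially the same route as the paper: the only nontrivial point is closure under products, handled by using topological transitivity of $\ker c$ to produce a bridging element $\gamma\in\ker c$ between the open sets $r(V)$ and $d(U)$ (the paper uses $r(c^{-1}(\{t\})\cap H)$ and $d(c^{-1}(\{s\})\cap H)$ directly, which amounts to the same thing), and then composing $\alpha'\gamma\beta'$ inside $H$. The extra details you supply (the identity element, shrinking to bisections on which $c$ is constant) are harmless refinements of the paper's argument.
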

\begin{proof}
	
	It suffices to show $st\in c(H)$ for all $s,t\in c(H)$,
	since it is clear that $c(H)$ is closed under the inverse.
	Take $\alpha,\beta\in H$ so that $s=c(\alpha)$ and $t=c(\beta)$ hold.
	Since $d(c^{-1}(\{s\} )\cap H)$ and $r(c^{-1}(\{t\})\cap H)$ are  non-empty open sets and $\ker c$ is topologically transitive,
	there exists $\gamma\in \ker c$ such that $d(\gamma)\in r(c^{-1}(\{t\})\cap H)$ and $r(\gamma)\in d(c^{-1}(\{s\})\cap H)$ hold.
	Take $\alpha'\in c^{-1}(\{s\})\cap H$ and $\beta'\in c^{-1}(\{t\})\cap H$ such that $d(\alpha')=r(\gamma)$ and $r(\beta')=d(\gamma)$.
	Now we have
	\[
	st=c(\alpha')c(\gamma)c(\beta')=c(\alpha'\gamma\beta')\in c(H).
	\]
	Thus, $c(H)$ is a subgroup of $\Gamma$.
	\qed
\end{proof}

\begin{rem}
We remark that the images of cocycles do not necessarily become subgroups in general.
Indeed, put $G\defeq\Z/2\Z\coprod \Z/2\Z$ and $\Gamma \defeq \Z/2\Z \times\Z/2\Z$.
Then $G$ is a discrete, hence \'etale, groupoid such that $\lvert G^{(0)}\rvert=2$ and $d=r$ hold.
There exists a (continuous) cocycle $c\colon G\to\Gamma$ such that $c(G)=\{(0,0), (1,0), (0,1)\}$,
which is not a subgroup of $\Gamma$. 
\end{rem}

The next lemma follows from the elementary set theory.

\begin{lem}
	Let $G$ be an \'etale groupoid, $\Gamma$ be a discrete group and $c\colon G\to \Gamma$ be a continuous cocycle.
	For $\Lambda\in \mathcal{E}$,
	$c(c^{-1}(\Lambda))=\Lambda$ holds if $c$ is surjective.
\end{lem}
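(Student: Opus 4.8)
The plan is to verify this purely set-theoretically, as the statement itself advertises, and in particular the group structure of $\Lambda$ and the cocycle structure of $c$ play no role whatsoever; all that matters is that $c$ is surjective as a map of sets. I would split the claimed equality into the two inclusions and handle each separately.

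First I would establish the inclusion $c(c^{-1}(\Lambda))\subset\Lambda$, which holds for an arbitrary map $c$ and an arbitrary subset $\Lambda\subset\Gamma$, with no surjectivity hypothesis. Indeed, any element of $c(c^{-1}(\Lambda))$ has the form $c(\gamma)$ for some $\gamma\in c^{-1}(\Lambda)$, and by definition of the preimage $c(\gamma)\in\Lambda$.

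For the reverse inclusion $\Lambda\subset c(c^{-1}(\Lambda))$, this is exactly the place where surjectivity enters. Given $s\in\Lambda$, surjectivity of $c$ yields some $\gamma\in G$ with $c(\gamma)=s$. Since $s\in\Lambda$, we have $\gamma\in c^{-1}(\Lambda)$, and therefore $s=c(\gamma)\in c(c^{-1}(\Lambda))$. Combining the two inclusions gives $c(c^{-1}(\Lambda))=\Lambda$.

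There is essentially no obstacle here: the only subtlety worth flagging is that the forward inclusion requires nothing, whereas the reverse inclusion genuinely fails in general when $\Lambda$ is not contained in the image of $c$, and surjectivity is precisely the hypothesis that guarantees $\Lambda\subset c(G)$. This asymmetry is the single point the proof must make explicit.
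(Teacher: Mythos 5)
Your proof is correct and is exactly the elementary set-theoretic argument the paper has in mind (the paper omits the proof entirely, remarking only that the lemma ``follows from the elementary set theory''). Both inclusions are handled as one would expect, with surjectivity used precisely and only for $\Lambda\subset c(c^{-1}(\Lambda))$.
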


Finally, we complete the proof of Theorem \ref{theorem intermediate subgroupoids and subgroups} by showing the next lemma.

\begin{lem}
	Let $G$ be an \'etale groupoid, $\Gamma$ be a discrete group and $c\colon G\to \Gamma$ be a continuous cocycle.
	Then $c^{-1}(c(H))=H$ holds for $H\in\mathcal{D}$ if $\ker c\subset G$ is minimal.
\end{lem}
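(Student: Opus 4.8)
The plan is to prove the nontrivial inclusion $c^{-1}(c(H))\subset H$, since $H\subset c^{-1}(c(H))$ holds for any map. So I would fix $\gamma\in c^{-1}(c(H))$, set $s\defeq c(\gamma)\in c(H)$, and choose $\alpha\in H$ with $c(\alpha)=s$. The goal is to show $\gamma\in H$, and the strategy is to transport $\alpha$ along the $\ker c$-orbit until its domain matches $d(\gamma)$, producing an element $\mu\in H$ with $c(\mu)=s$ and $d(\mu)=d(\gamma)$; once this is achieved, $\gamma\mu^{-1}\in\ker c\subset H$ forces $\gamma=(\gamma\mu^{-1})\mu\in H$.

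To carry this out, I would first observe that $c^{-1}(\{s\})$ is open in $G$, because $\Gamma$ is discrete and $c$ is continuous. Hence $c^{-1}(\{s\})\cap H$ is open and nonempty (it contains $\alpha$), and since the domain map of an \'etale groupoid is open, $O\defeq d(c^{-1}(\{s\})\cap H)$ is a nonempty open subset of $G^{(0)}$. Next I would invoke minimality of $\ker c$: the set $r(d^{-1}(\{d(\gamma)\})\cap\ker c)$ is dense in $G^{(0)}$, so it meets the open set $O$. This yields $\eta\in\ker c$ with $d(\eta)=d(\gamma)$ and $r(\eta)\in O$.

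Since $r(\eta)\in O$, I can pick $\beta\in c^{-1}(\{s\})\cap H$ with $d(\beta)=r(\eta)$, and set $\mu\defeq\beta\eta$. Then $\mu\in H$ because $\beta\in H$ and $\eta\in\ker c\subset H$, while $c(\mu)=c(\beta)c(\eta)=s$ and $d(\mu)=d(\eta)=d(\gamma)$. Because $d(\gamma)=d(\mu)$, the product $\gamma\mu^{-1}$ is defined and $c(\gamma\mu^{-1})=s s^{-1}=e$, so $\gamma\mu^{-1}\in\ker c\subset H$. Finally $\gamma=(\gamma\mu^{-1})\mu$ is a product of two elements of $H$, hence $\gamma\in H$, which completes the inclusion and the proof.

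The main obstacle is that minimality of $\ker c$ only provides density of orbits, not a single transitive orbit, so one cannot hope to connect $d(\gamma)$ directly to $d(\alpha)$ by a single element of $\ker c$. The key maneuver is to convert density into an exact statement using openness: the target set $O$ is open (this relies on $\Gamma$ being discrete, so $c^{-1}(\{s\})$ is open, together with $d$ being an open map), hence a dense $\ker c$-orbit must genuinely meet $O$, and the exact group element $\mu$ is then obtained from the groupoid multiplication rather than from any limiting or approximation argument.
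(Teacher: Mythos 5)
Your proof is correct and follows essentially the same approach as the paper: use openness of $c^{-1}(\{s\})\cap H$ together with openness of the domain/range map to produce a nonempty open target set, apply minimality of $\ker c$ to land the orbit of a unit of $\gamma$ in that set, and then factor $\gamma$ as a product of elements of $H$. The only differences are cosmetic (the paper works with $r$ instead of $d$ and writes the final factorization as $\gamma=\beta^{-1}\alpha'(\alpha'^{-1}\beta\gamma)$ rather than $(\gamma\mu^{-1})\mu$).
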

\begin{proof}
	For $H\in\mathcal{D}$,
	the inclusion $H\subset c^{-1}(c(H))$ is obvious.
	We show  $H\supset c^{-1}(c(H))$.
	Take $\gamma\in c^{-1}(c(H))$.
	Then there exists $\alpha\in H$ such that $c(\gamma)=c(\alpha)$.
	Since $r(c^{-1}(\{c(\alpha)\})\cap H)\subset G^{(0)}$ is a non-empty open set and $\ker c$ is minimal,
	there exists $\beta\in \ker c$ such that $d(\beta)=r(\gamma)$ and $r(\beta)\in r(c^{-1}(\{c(\alpha)\})\cap H)$ hold.
	In addition, there exists $\alpha'\in c^{-1}(\{c(\alpha)\})\cap H$ such that $r(\alpha')=r(\beta)$.
	Now $\alpha'^{-1},\beta,\gamma$ are composable and $c(\alpha'^{-1}\beta\gamma)=e$ holds.
	Since we have $\beta, \alpha'^{-1}\beta\gamma\in\ker c\subset H$ and $\alpha'\in H$,
	it follows 
	\[\gamma=\beta^{-1}\alpha'(\alpha'^{-1}\beta\gamma)\in H.\]
	Now we have shown $c^{-1}(c(H))=H$.
	\qed
\end{proof}

\begin{ex}\label{example subgroupoid of Deaconu-Renault system}
	We investigate open subgroupoids of Deaconu-Renault groupoids.
	See \cite[Example 2.3.7]{asims} for Deaconu-Renault groupoids.
	Let $X$ be a locally compact Hausdorff space and $T\colon X\to X$ be a local homeomorphism.
	Then the Deaconu-Renault groupoid $G(X,T)$ is defined as follows.
	First, 
	put
	\begin{align*}
	&G(X,T)\defeq \{(y, n-m,x)\in X\times \Z \times X\mid T^n(y)=T^m(x), n,m\in\N\}, \\
	&G(X,T)^{(0)}\defeq \{(x,0,x)\in G(X,T)\mid x\in X\}.
	\end{align*}
	We identify $G(X,T)^{(0)}$ with $X$ in the obvious way.
	Define $r((y,n,x))=y$ and $d((y,n,x))=x$.
	For $(z,n,y),(y,m,x)\in G(X,T)$,
	their product is $(z, n+m, x)\in G(X,T)$.
	In these operations,
	$G(X,T)$ is a groupoid.
	Remark that the inverse is given by $(y,n,x)^{-1}=(x,-n, y)$.
	For open sets $U,V\subset X$ and $n,m\in\N$,
	put
	\[
	Z(U,n,m,V)\defeq \{(y,n-m,x)\in G(X,T)\mid T^n(y)=T^m(x), y\in U, x\in V\}.
	\]
	A family of these sets $Z(U,n,m,V)$ becomes an open basis of $G(X,T)$.
	With respect to this topology,
	$G(X,T)$ is an \'etale locally compact Hausdorff groupoid.
	
	Define a map $c\colon G(X,T)\to \Z$ by $c((y,n,x))=n$.
	Then $c$ is a continuous cocycle.
	If $\ker c$ is minimal,
	we may apply Theorem \ref{theorem intermediate subgroupoids and subgroups}.
	Then it follows that a proper intermediate open subgroupoid $\ker c\subset H\subset G(X,T)$ is of the form
	\[
	H=\{(y,n-m,x)\in G(X,T)\mid n-m\in k\Z\}
	\]
	for some $k\in \N$.
	For example,
	let $\Sigma$ be a finite set, $X\defeq \Sigma^{\N}$ and $\sigma\colon X\to X$ be the shift map (i.e.\ $\sigma(\{x_i\}_{i\in\N})=\{x_{i+1}\}_{i\in\N}$ for $\{x_i\}\in X$).
	Then the kernel of $c\colon G(X,\sigma)\to\Z$ is minimal (we will observe this minimality in Example \ref{example subalgebra of Cuntz algebra On}).
	This example gives a way to compute certain intermediate subalgebras of the Cuntz algebras $\mathcal{O}_n$.
	See Example \ref{example subalgebra of Cuntz algebra On}.
	
\end{ex}

\begin{ex}\label{example counter example of infinite size Cuntz}
	In Theorem \ref{theorem intermediate subgroupoids and subgroups},
	we assumed that the kernels of cocycles are minimal.
	If we replace ``minimal'' with ``topologically transitive'',
	then Theorem \ref{theorem intermediate subgroupoids and subgroups} does not hold.
	We give such an example here.
	We use the universal groupoid of the polycyclic monoid $P_{\infty}$.
	See \cite[Example 3, p182]{paterson2012groupoids} for more details.
	
	Let $\Sigma$ be a countably infinite set and $\Sigma^*\defeq \bigcup_{n\in\N}\Sigma^n$ be the set of finite words.
	In addition, put $X\defeq \Sigma^*\cup \Sigma^{\N}$.
	For $\mu\in\Sigma^*$ and a finite set $F\subset \Sigma^*$,
	define $C_{\mu,F}\subset X$ to be the set of finite or infinite words that begin with $\mu$ and do not begin with the elements in $F$.
	If $F=\emptyset$,
	we simply write $C_{\mu}=C_{\mu,F}$. 
	Then a family of all $C_{\mu, F}$ is an open basis of $X$.
	With respect to this topology,
	$X$ is a compact Hausdorff set.
	Let $P_{\infty}$ be the polycyclic monoid of infinite size.
	Namely,
	$P_{\infty}$ is the universal inverse semigroup generated by $\{s_a\}_{a\in\Sigma}\cup \{0,1\}$ such that $s_a^*s_b=\delta_{a,b}1$ for all $a,b\in\Sigma$.
	For $\mu\in\Sigma^n$,
	put $s_{\mu}\defeq s_{\mu_1}s_{\mu_2}\cdots s_{\mu_{n}}\in P_{\infty}$.\footnote{For the empty word $\emptyset$,
	define $s_{\emptyset}=1$.}
	Then we have the equation
	\[
	P_{\infty}=\{s_{\mu}s_{\nu}^*\mid \mu,\nu\in\Sigma^*\}\cup\{0\}.
	\]
	
	We define an action $\alpha\colon P_{\infty}\curvearrowright X$ as follows.
	For $s_{\mu}s_{\nu}^*$,
	define $\alpha_{s_{\mu}s_{\nu}^*}\colon C_{\nu}\to C_{\mu}$ by $\alpha_{s_{\mu}s_{\nu}^*}(\nu x)=\mu x$ for $x\in X$.
	Then $\alpha_{s_{\mu}s_{\nu}^*}$ is a homeomorphism and the map $s_{\mu}s_{\nu}^*\mapsto \alpha_{s_{\mu}s_{\nu}^*}$ defines an action $\alpha\colon P_{\infty}\curvearrowright X$.
	
	Put $G\defeq P_{\infty}\ltimes_{\alpha}X$ and define a continuous cocycle $c\colon G\to\Z$ by 
	\[c([s_{\mu}s_{\nu}^*, x])=\lvert\mu\rvert-\lvert\nu\rvert,\]
	where $\lvert \mu\rvert$ denotes the length of $\mu$.
	It follows from \cite[Theorem 3.5]{Paterson2002} that $G$ is Hausdorff.
	For the reader's convenience,
	we check that $G$ is Hausdorff using $\ker c$.
	Take $\gamma \in \overline{G^{(0)}}\subset G$.
	Since $\ker c$ is closed in $G$ and $G^{(0)}\subset \ker c$,
	we have $\gamma\in\ker c$.
	Therefore there exists $l\in \N$, $\mu,\nu\in \Sigma^l$ and $\eta\in X$ such that
	\[
	\gamma = [s_{\mu}s_{\nu}^*, \nu\eta].
	\]
	Since $\gamma\in\overline{G^{(0)}}$,
	the domain and range of $\gamma$ coincide and we have
	\[\nu\eta=d(\gamma)=r(\gamma)=\mu\eta.\]
	Hence we obtain $\mu=\nu$ and therefore $\gamma\in G^{(0)}$.
	Since $G^{(0)}$ turns out to be closed in $G$,
	$G$ is Hausdorff by \cite[Lemma 8.3.2]{asims}.
	
	In addition, we observe that $\ker c$ is not minimal but topologically transitive.
	Indeed, since the orbit of the empty word $\emptyset$ in $\ker c$ is $\{\emptyset\}$ itself,
	$\ker c$ is not minimal.
	Next, we show that $\ker c$ is topologically transitive.
	Fix $a\in \Sigma$ and put $x\defeq aaa\cdots \in \Sigma^{\N}$.
	We claim that the orbit of $x$ in $\ker c$ is dense in $X$.
	Take $\mu\in\Sigma^*$ and a finite set $F\subset \Sigma^*$ so that $C_{\mu, F}$ is not empty.
	Replacing $F$ appropriately,
	we may assume that for every $\nu\in F$ there exists a $\eta_{\nu}\in\Sigma^*\setminus\{\emptyset\}$ such that $\nu=\mu\eta_{\nu}$.
	Then there exists $w\in \Sigma^*$ such that the prefixes of $w$ do not coincide with $\eta_{\nu}$ for all $\nu\in F$ since $\Sigma$ is an infinite set.
	Putting $y=\mu w aaa\cdots\in\Sigma^{\N}$,
	one can see that $y\in C_{\mu, F}$ and 
	\[\alpha_{s_{\mu w}s^*_{v}}(x)=y\]
	holds,
	where we put $v\defeq aa\cdots a\in \Sigma^{\lvert\mu\rvert +\lvert w\rvert}$ here.
	Since we have $s_{\mu w}s^*_{v}\in \ker c$,
	$y$ is contained in the orbit of $x$ in $\ker c$.
	Therefore $\ker c$ is topologically transitive since $\ker c$ has a dense orbit.

	Put
	\[
	T\defeq \{s_{\mu}s_{\nu}^*\in P_{\infty}\mid \lvert \mu\rvert -\lvert \nu\rvert\in 2\Z, \lvert\mu\rvert\geq 1, \lvert\nu\rvert\geq 1\}\cup \{0,1\}.
	\]
	Then $T$ is an inverse subsemigroup of $P_{\infty}$.
	Note that $T\ltimes_{\alpha}X$ can be regarded as an open subgroupoid of $G$ in the obvious way.
	One can see that $c(T\ltimes_{\alpha}X)=2\Z$ and $\ker c\subset T\ltimes_{\alpha}X\subsetneq c^{-1}(2\Z)$ holds.
	Indeed, 
	put $\gamma\defeq [s_a^2, \emptyset]\in c^{-1}(2\Z)$,
	where $a\in\Sigma$ is an arbitrary element and $\emptyset\in\Sigma^*$ is the empty word.
	Then one can check that$\gamma\not\in T\ltimes_{\alpha}X$.
	Therefore, the map $\mathcal{D}\ni H\mapsto c(H)\in\mathcal{E}$ in Theorem \ref{theorem intermediate subgroupoids and subgroups} is not injective and, in particular, Theorem \ref{theorem intermediate subgroupoids and subgroups} does not hold for this example.
	
	We remark that $C^*_r(P_{\infty}\ltimes_{\alpha}X)$ is isomorphic to the Cuntz algebra $\mathcal{O}_{\infty}$ (see \cite[Corollary 3.9]{Paterson2002}).
	The above \'etale groupoid $T\ltimes_{\alpha}X$ yields a strange intermediate C*-subalgebra $\mathcal{O}^{\T}_{\infty}\subset B\subset \mathcal{O}_{\infty}$ (see Example \ref{example strange intermediate subalgebra}).
\end{ex}

We apply our results to the theory of groupoid C*-algebras.
Summarizing Corollary \ref{corollary Galois corr between subgroupoid and subalgebra}, Corollary \ref{corollary coaction of discrete group on groupoid C*-algebras} and Theorem \ref{theorem intermediate subgroupoids and subgroups},
we obtain the next corollary.

\begin{cor}\label{corollary galois corr for discrete group coactions}
	Let $G$ be an effective groupoid, $\Gamma$ be a discrete group and $\delta\colon C^*_r(G)\to C^*_r(G)\otimes C^*_r(\Gamma)$ be a coaction.
	Assume that the fixed point algebra $C^*_r(G)_e$ is simple and contains $C_0(G^{(0)})$.
	Then the map $\Lambda\mapsto C^*_r(c^{-1}(\Lambda))$ is a bijection between the set of subgroups $\Lambda\subset \Gamma$ and the set of intermediate C*-subalgebras $C^*_r(G)_e\subset B\subset C^*_r(G)$ which are closed linear spans of normalisers.
	In particular,
	if an intermediate C*-subalgebra $C^*_r(G)_e\subset B\subset C^*_r(G)$ is a closed linear span of normalisers,
	then there exists a conditional expectation $E\colon C^*_r(G)\to B$.
\end{cor}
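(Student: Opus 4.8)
The plan is to combine the three principal results already established, namely Corollary \ref{corollary coaction of discrete group on groupoid C*-algebras}, Corollary \ref{corollary Galois corr between subgroupoid and subalgebra} and Theorem \ref{theorem intermediate subgroupoids and subgroups}, and then to produce the conditional expectation by hand. First I would reduce the coaction to a cocycle: since $G$ is effective and $C_0(G^{(0)})\subset C^*_r(G)_e$, Corollary \ref{corollary coaction of discrete group on groupoid C*-algebras} furnishes a continuous cocycle $c\colon G\to\Gamma$ with $\delta=\delta_c$, and the proof of that corollary identifies the spectral subspaces as $C^*_r(G)_s=\overline{C_c(c^{-1}(\{s\}))}$; in particular $C^*_r(G)_e=\overline{C_c(\ker c)}=C^*_r(\ker c)$. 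To invoke Theorem \ref{theorem intermediate subgroupoids and subgroups} I must check that $\ker c$ is minimal and that $c$ is surjective. Minimality comes from the hypothesis that $C^*_r(\ker c)$ is simple: a proper nonempty open invariant subset of $G^{(0)}$ would produce a proper nonzero ideal of $C^*_r(\ker c)$, so simplicity forces $\ker c$ to be minimal. Surjectivity of $c$ is the one input not automatic from the data (a one-point $G$ already admits a coaction with $c\equiv e$), so I would either assume $c(G)=\Gamma$ or replace $\Gamma$ by the subgroup $c(G)\leq\Gamma$, noting that this leaves all the intermediate subalgebras unchanged.

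With $c$ surjective and $\ker c$ minimal, I would assemble the bijection by chaining two correspondences. Theorem \ref{theorem intermediate subgroupoids and subgroups} gives a bijection $\mathcal D\ni H\mapsto c(H)\in\mathcal E$ with inverse $\Lambda\mapsto c^{-1}(\Lambda)$, where $\mathcal D$ consists of the open subgroupoids $H$ with $\ker c\subset H$. Corollary \ref{corollary Galois corr between subgroupoid and subalgebra} gives a bijection $H\mapsto C^*_r(H)$ between open subgroupoids $G^{(0)}\subset H\subset G$ and intermediate C*-subalgebras $C_0(G^{(0)})\subset B\subset C^*_r(G)$ that are closed linear spans of normalisers. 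The bookkeeping step is to see that this second bijection restricts to a bijection between $\mathcal D$ and those $B$ that moreover contain $C^*_r(G)_e$: the assignment $H\mapsto C^*_r(H)$ is inclusion-preserving, so $C^*_r(H)\supset C^*_r(\ker c)=C^*_r(G)_e$ precisely when $H\supset\ker c$, i.e.\ $H\in\mathcal D$, and conversely any intermediate $B\supset C^*_r(G)_e\supset C_0(G^{(0)})$ that is a closed span of normalisers equals $C^*_r(H)$ for a unique such $H$. Composing the two bijections yields that $\Lambda\mapsto C^*_r(c^{-1}(\Lambda))$ is the desired bijection onto the intermediate C*-subalgebras between $C^*_r(G)_e$ and $C^*_r(G)$ that are closed linear spans of normalisers.

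For the final assertion I would construct the conditional expectation directly, and I expect this to be the main technical point. Given such a $B$, the above identifies $B=C^*_r(H)$ with $H=c^{-1}(\Lambda)$, and by the last clause of Theorem \ref{theorem intermediate subgroupoids and subgroups} this $H$ is automatically closed, hence clopen; consequently restriction $f\mapsto f|_H$ carries $C_c(G)$ into $C_c(H)$, since for $f\in C_c(G)$ the set $\supp^{\circ}(f)\cap H$ has compact closure inside $H$. The cleanest route to boundedness is the compression formula: for $x\in G^{(0)}$ let $P_x$ be the orthogonal projection of $\ell^2(G_x)$ onto $\ell^2(H_x)$; because $H$ is a subgroupoid, a direct computation shows that $\lambda^H_x(f|_H)$ is the compression $P_x\lambda^G_x(f)P_x$, so that $\lVert f|_H\rVert_r\leq\lVert f\rVert_r$ and the restriction map extends to a norm-decreasing linear map $E\colon C^*_r(G)\to C^*_r(H)=B$.

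It remains to observe that $E$ is genuinely a conditional expectation. Since $f|_H=f$ for $f\in C_c(H)$ and $C_c(H)$ is dense in $B$, the map $E$ restricts to the identity on $B$, so it is a contractive idempotent whose range is the C*-subalgebra $B$; by Tomiyama's theorem $E$ is then automatically positive and $B$-bilinear, i.e.\ a conditional expectation $E\colon C^*_r(G)\to B$. The only genuinely delicate points in the whole argument are the surjectivity reduction in the first paragraph and the verification of the compression formula that makes $E$ contractive; everything else is a formal concatenation of the cited results.
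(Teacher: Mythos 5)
Your proposal follows the paper's own strategy for the main bijection --- obtain $c$ from Corollary \ref{corollary coaction of discrete group on groupoid C*-algebras}, deduce minimality of $\ker c$ from simplicity of $C^*_r(G)_e=C^*_r(\ker c)$, and compose the bijections of Corollary \ref{corollary Galois corr between subgroupoid and subalgebra} and Theorem \ref{theorem intermediate subgroupoids and subgroups} --- so that part is essentially identical. You deviate in two places, both to your credit. First, you notice that Theorem \ref{theorem intermediate subgroupoids and subgroups} requires $c$ to be surjective, which the data of the corollary do not guarantee (the trivial coaction $\delta(a)=a\otimes\lambda_e$ gives $c\equiv e$, and then $\Lambda\mapsto C^*_r(c^{-1}(\Lambda))$ is constant, so injectivity genuinely fails for nontrivial $\Gamma$); the paper's proof passes over this silently, and your repair --- replacing $\Gamma$ by $c(G)$, which is a subgroup because $\ker c$ is topologically transitive --- is the right fix and arguably exposes a small imprecision in the statement as written. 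Second, for the final assertion the paper simply cites \cite[Lemma 3.4]{BrownExelFuller2021} (existence of a conditional expectation onto $C^*_r(H)$ for $H$ clopen), whereas you reconstruct that expectation by hand via restriction $f\mapsto f|_H$ and the compression $P_x\lambda^G_x(f)P_x=\lambda^H_x(f|_H)$, finishing with Tomiyama's theorem; this is a correct, self-contained rendition of the standard proof of the cited lemma, so it buys independence from the reference at the cost of some length. Both routes are sound; yours is the more careful one.
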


\begin{proof}
	By Corollary \ref{corollary coaction of discrete group on groupoid C*-algebras},
	there exists a continuous cocycle $c\colon G\to \Gamma$ such that $\delta=\delta_c$.
	Since we assume that $C^*_r(G)_e=C^*_r(\ker c)$ is simple,
	$\ker c$ is minimal (for example, see \cite[Proposition 5.7]{Brown2014}).
	Now, combining Corollary \ref{corollary Galois corr between subgroupoid and subalgebra} with Theorem \ref{theorem intermediate subgroupoids and subgroups},
	it follows that the map $\Lambda\mapsto C^*_r(c^{-1}(\Lambda))$ in the statement is bijective.
	
	The last assertion follows from the fact that for an open subgroupoid $G^{(0)}\subset H\subset G$, there exists a conditional expectation $E\colon C^*_r(G)\to C^*_r(H)$ if and only if $H\subset G$ is closed (see \cite[Lemma 3.4]{BrownExelFuller2021}).
	\qed
\end{proof}

\begin{ex}\label{example subalgebra of Cuntz algebra On}
	Using Example \ref{example subgroupoid of Deaconu-Renault system},
	we investigate intermediate C*-subalgebras of the inclusion $\mathcal{O}_n^{\T}\subset \mathcal{O}_n$,
	where $\mathcal{O}_n$ is the Cuntz algebra and $\mathcal{O}_n^{\T}$ is the fixed point algebra of the gauge action $\T\curvearrowright \mathcal{O}_n$.
	Let $\Sigma$ be a finite set with $\lvert\Sigma\rvert=n$, $X\defeq \Sigma^{\N}$ and $\sigma\colon X\to X$ be the shift map.
	Let $G\defeq G(X, \sigma)$ denote the Deaconu-Renault groupoid (see Example \ref{example subgroupoid of Deaconu-Renault system}).
	Then $C^*_r(G(X,\sigma))$ is isomorphic to the Cuntz algebra $\mathcal{O}_n$.
	Indeed,
	for $a\in \Sigma$,
	denote the set of all infinite words beginning with $a$ by $C_a$.
	Then $Z(X, 1,0, C_a)$ is a compact open set of $G$ and the characteristic function $S_a\defeq \chi_{Z(X, 1,0, C_a)}$ belongs to $C_c(G)$.
	One can see that $\{S_a\}_{a\in\Sigma}$ satisfies the Cuntz relation.
	In addition, $\{S_a\}_{a\in\Sigma}$ generates $C^*_r(G)$.
	Hence $C^*_r(G)$ is isomorphic to the Cuntz algebra $\mathcal{O}_n$.
	From now on, we identify $C^*_r(G)$ with $\mathcal{O}_n$.
	
	Consider the cocycle $c\colon G\to\Z$ defined by $c((y,m,x))=m$.
	Then $\ker c$ is minimal.
	Indeed,
	take $x\in X$, $l\in\N$ and $\mu\in \Sigma^l$ arbitrarily.
	Then we have
	\[
	(\mu x_{l+1}x_{l+2}\cdots,0,x )\in \ker c
	\]
	and therefore the orbit of $x$ in $\ker c$ intersects with $C_{\mu}$,
	where $C_{\mu}$ denotes the set of all infinite words beginning with $\mu$.
	Since $\{C_{\mu}\}_{\mu\in\Sigma^*}$ is a basis of $X$,
	the orbit of $x$ in $\ker c$ is dense in $X$ and therefore $\ker c$ is minimal.
	Hence, an intermediate C*-subalgebra $C^*_r(G)_e\subset B\subset C^*_r(G)$ that is a closed linear span of normalisers is of the form $C^*_r(c^{-1}(k\Z))$ for some $k\in\N$.
	
	We remark that the fixed point algebra $C^*_r(G)_e$ of the coaction $\delta_c$ coincides with the fixed point algebra $\mathcal{O}_n^{\T}$ of the gauge action $\T\curvearrowright \mathcal{O}_n$.
	In addition,
	one can see that 
	\[
	C^*_r(c^{-1}(k\Z))=\overline{\Span}\{S_{\mu}S_{\nu}^*\in\mathcal{O}_n\mid \mu,\nu\in\Sigma^*, \lvert\mu\rvert-\lvert\nu\rvert\in k\Z \}
	\]
	holds, where we define $S_{\mu}\defeq S_{\mu_1}S_{\mu_2}\cdots S_{\mu_{\lvert\mu\rvert}}$ for $\mu\in\Sigma^*$.
	Hence, it turns out that an intermediate C*-subalgebras $\mathcal{O}_n^{\T}\subset B \subset\mathcal{O}_n$ is of the form
	\[
	B=\overline{\Span}\{S_{\mu}S_{\nu}^*\in\mathcal{O}_n\mid \mu,\nu\in\Sigma^*, \lvert\mu\rvert-\lvert\nu\rvert\in k\Z \}
	\]
	for some $k\in \N$ if $B$ is a closed linear span of normalisers.
\end{ex}

\begin{ex}\label{example strange intermediate subalgebra}
	We investigate intermediate C*-subalgebras between the inclusion $\mathcal{O}_{\infty}^{\T}\subset \mathcal{O}_{\infty}$,
	where $\mathcal{O}_{\infty}^{\T}$ is the fixed point algebra of the gauge action $\T\curvearrowright \mathcal{O}_{\infty}$.
	In case of $\mathcal{O}_{\infty}$,
	there exist more intermediate C*-subalgebras between the inclusion $\mathcal{O}_{\infty}^{\T}\subset \mathcal{O}_{\infty}$ than the ones between the inclusion $\mathcal{O}^{\T}_n\subset \mathcal{O}_n$.
	Inspired by Example \ref{example counter example of infinite size Cuntz},
	we construct a somehow strange intermediate C*-subalgebra $\mathcal{O}_{\infty}^{\T}\subset B\subset \mathcal{O}_{\infty}$.
	Precisely, there exists an intermediate C*-subalgebra $\mathcal{O}^{\T}_{\infty}\subset B \subset\mathcal{O}_{\infty}$ that is not of the form
	\[
	\overline{\Span}\{S_{\mu}S_{\nu}^*\in\mathcal{O}_{\infty}\mid \mu,\nu\in\Sigma^*, \lvert\mu\rvert-\lvert\nu\rvert\in k\Z \}
	\]
	for any $k\in\N$,
	where $\Sigma$ denotes a countably infinite set and $\{S_a\}_{a\in\Sigma}$ denotes the generators of $\mathcal{O}_{\infty}$.
	To see this, define \[B\defeq \overline{\Span}\big( \{S_{\mu}S_{\nu}^*\in \mathcal{O}_{\infty}\mid \mu,\nu\in\Sigma^*, \lvert\mu\rvert-\lvert\nu\rvert\in 2\Z, \lvert\mu\rvert\lvert\nu\rvert\geq 1 \}\cup \{1\}\big). \]
	We claim that $B$ is a C*-subalgebra $\mathcal{O}_{\infty}^{\T}\subset B \subset \mathcal{O}_{\infty}$ which is not of the form 
	\[
	\overline{\Span}\{S_{\mu}S_{\nu}^*\in\mathcal{O}_{\infty}\mid \mu,\nu\in\Sigma^*, \lvert\mu\rvert-\lvert\nu\rvert\in k\Z \}
	\]
	for any $k\in\N$.
	It is easy to check that $\mathcal{O}_{\infty}^{\T}\subset B \subset \mathcal{O}_{\infty}$ is an intermediate C*-subalgebra.
	We show that $B$ dose not contain $S_a^k$ for any $a\in\Sigma$ and $k\in\N_{>0}$,
	which implies that $B$ is not of the form
	\[\overline{\Span}\{S_{\mu}S_{\nu}^*\in\mathcal{O}_{\infty}\mid \mu,\nu\in\Sigma^*, \lvert\mu\rvert-\lvert\nu\rvert\in k\Z \}\]
	for any $k\in\N$.
	For $S_{\mu}S_{\nu}^*\in\mathcal{O}_{\infty}$,
	define a bounded linear operator $\pi(S_{\mu}S_{\nu}^*)\in B(\ell^2(\Sigma^*))$ by
	\begin{align*}
	\pi(S_{\mu}S_{\nu}^*)\delta_{\eta}=
	\begin{cases}
		\delta_{\mu\rho} & (\text{$\eta=\nu\rho$ for some $\rho\in\Sigma^*$} ) \\
		0 & (\text{otherwise}).
	\end{cases}
	\end{align*}
	Then we obtain the *-representation $\pi\colon\mathcal{O}_{\infty}\to B(\ell^2(\Sigma^*))$.
	Define a functional $\varphi_{a,k}\in\mathcal{O}_{\infty}^*$ by
	\[
	\varphi_{a,k}(x)\defeq \i<\pi(S_a^k)\delta_{\emptyset}| \pi(x)\delta_{\emptyset}>,
	\]
	where $x\in \mathcal{O}_{\infty}$.
	Then we have $B\subset \ker\varphi_{a,k}$ and $\varphi_{a,k}(S_a^k)=1$.
	Hence we obtain $S_a^k\not\in B$ for any $a\in\Sigma$ and $k\in\N_{>0}$ and $B$ is not of the form
	\[\overline{\Span}\{S_{\mu}S_{\nu}^*\in\mathcal{O}_{\infty}\mid \mu,\nu\in\Sigma^*, \lvert\mu\rvert-\lvert\nu\rvert\in k\Z \}\]
	for any $k\in\N$.
	
\end{ex}

\bibliographystyle{plain}
\bibliography{bunken}

\end{document}